\documentclass[pdflatex,sn-mathphys-num]{sn-jnl}

\usepackage{amssymb}
\usepackage{multirow}
\usepackage{ifthen}
\usepackage{float}
\usepackage[section]{placeins}
\usepackage{graphicx}
\usepackage{longtable}
\usepackage{mathrsfs}
\usepackage{graphicx}%
\usepackage{multirow}%
\usepackage{amsmath,amssymb,amsfonts}%
\usepackage{amsthm}%
\usepackage{mathrsfs}%
\usepackage[title]{appendix}%
\usepackage{xcolor}%
\usepackage{textcomp}%
\usepackage{manyfoot}%
\usepackage{booktabs}%
\usepackage{algorithm}%
\usepackage{algorithmicx}%
\usepackage{algpseudocode}%
\usepackage{listings}%
\usepackage{booktabs}

\floatstyle{ruled}
\newfloat{algorithm}{htb}{alg}
\floatname{algorithm}{Algorithm}

\theoremstyle{thmstyleone}%
\newtheorem{theorem}{Theorem}
\newtheorem{proposition}[theorem]{Proposition}%

\theoremstyle{thmstyletwo}%

\theoremstyle{thmstylethree}%
\newtheorem{definition}{Definition}%
 
\usepackage{xcolor}

\newcommand{\vecc}{}
\newcommand{\te}{\mathcal}

\newcommand{\bcirc}{\operatorname{{\tt bcirc}}}

\newcommand{\fold}{\operatorname{{\tt fold}}}

\newcommand{\unfold}{\operatorname{{\tt unfold}}}

\newcommand{\R}{\mathbb R}
\newcommand{\C}{\mathbb C}
\newcommand{\A}{\mathcal{A}}
\begin{document}

\title[ The tubal Arnoldi method for tensor function approximation]{ The tubal Arnoldi method for tensor function approximation}


\author*[1]{\fnm{Fatima} \sur{Bouyghf}}\email{fatima.bouyghf@enac.fr}
\equalcont{These authors contributed equally to this work.}

\author[2]{\fnm{Mohamed} \sur{El Ghomari}}\email{m.elghomari10@gmail.com}
\equalcont{These authors contributed equally to this work.}

\author[3]{\fnm{Alaa} \sur{ El Ichi}}\email{elichi.alaa@gmail.com}
\equalcont{These authors contributed equally to this work.}

\affil[1]{\orgdiv{OPTIM}, \orgname{\'{E}cole nationale de l'aviation civile, Universit\'{e} Toulouse }, \orgaddress{\street{ 7 avenue Edouard Belin }, \city{Toulouse}, \postcode{31400}, , \country{France}}}

\affil[2]{\orgdiv{Department of Mathematics}, \orgname{\'{E}cole Normale Sup\'{e}rieure, 
		Mohammed V University}, \orgaddress{\street{Takaddoum, B.P. 5118}, \city{Rabat}, \country{Morocco}}}

\affil[3]{\orgdiv{LMPA}, \orgname{Universit\'{e} du Littoral C\^{o}te d'Opale}, \orgaddress{\city{Calais}, \country{France}}}


\abstract{This paper describes Krylov subspace methods based on the tensor t-product for approximating quantities associated with third-order tensor functions. Specifically, it introduces the tensor-tubal Arnoldi method, which projects high-dimensional tensor problems onto lower-dimensional tensor Krylov subspaces. The derived method exploits novel algebraic properties of the t-product to address large-scale tensor computations efficiently. Applications include the evaluation of parameter-dependent tensor functions and the solution of multidimensional ordinary differential equations. Numerical experiments illustrate the effectiveness and accuracy of the proposed methods compared with other approaches.}

\keywords{Tensor Krylov subspaces, t-product, Arnoldi algorithm, Tensor full orthogonalization method, multidimensional ODEs, tensor functions, Stieltjes functions.}


\pacs{65B05, 15A69, 15A72, 65F22}

\maketitle

\section{Introduction}\label{sec1}
Computing the  approximation of    $f(A)b$ where $A \in \mathbb{R}^{n\times n}, b \in \mathbb{R}^{n}$
and $f$ is a function defined on the convex hull of the spectrum of $A$ is an important task in various applications. These include network analysis ($f(x):=\exp(x)$) \cite{EH}, Markov model analysis ($f(x):=\log(x)$) \cite{Singer}, lattice quantum chromodynamics ($f(x)=sign(x)$) \cite{van},  exponential integrators in differential equations ($f_t(x)=\exp{(-tx)}$)  \cite{Hochbruck,Hochbruck1}, and Quantum
Chromodynamics theory $(f(x)=\sqrt{x}$) \cite{BM}. For small matrices $A$, the function $f(A)$ can be defined in terms of the spectral decomposition or the Jordan canonical form \cite{Higham,Higham1}.  However, in many practical applications, $A$ is so large that such approaches become computationally infeasible. In this case, several Krylov subspace methods have been proposed (see, e.g., \cite{BMS1,Hochbruck,Jbilou,Knizhnerman,saad1}). The first method introduced in this context is the standard Arnoldi process, which uses subspaces with nonnegative powers of $A$. Consider the  Krylov subspace 
\begin{equation}\label{Km}
	K_m(A,b)=span\left\lbrace b, Ab,\ldots,A^{m-1}b \right\rbrace,
\end{equation} 
and applying $ m $ steps of the Arnoldi method to $ A $ with initial vector $ b $ yields the following decomposition:

$$
AV_m = V_m H_m + h_{m+1,m} v_{m+1} e_m^T
$$

where $ V_m = [v_1, \ldots, v_m] \in \mathbb{R}^{n \times m} $ is an orthonormal basis for the Krylov subspace \eqref{Km}, and $ H_m \in \mathbb{R}^{m \times m} $ is a Hessenberg matrix; see Saad \cite{saad1} for more details.
The  evaluation of $f(A)b$ via   the Arnoldi process  is  given by 
$$ f(A)b=\|b\|V_mf(H_m)e_1.$$

\noindent 	A tensor is a multidimensional array of data that generalizes matrices. Tensors have emerged as fundamental tools in various computational and applied mathematics domains, including network analysis \cite{CRZT} and multidimensional differential equations \cite{Khoromskij}. The tensor t-product, originally proposed by Kilmer and collaborators \cite{KM,MSL}, extends the classical notions of matrix?matrix and matrix?vector multiplication to third-order tensors.
The t-product has found applications in image processing \cite{EEJS,klimer3,MSL,RU1}, signal processing \cite{LLCZ,SHSW}, tensor function approximation \cite{Lund,BEJRR} and data completion and denoising \cite{BEJR}.
Lund \cite{Lund} defined tensor functions within the framework of the t-product for third-order $f$-square tensors, whose frontal slices are square matrices. The frontal slices of these tensors are square matrices and can be defined in terms of their spectral decomposition \cite{Lund} or the Jordan canonical form \cite{MQW2}.

\noindent In this paper, we introduce the tensor t-tubal Arnoldi method to evaluate the following tensor problem:
\begin{equation}\label{eq1}
	\mathcal{I}(f) := f(\mathcal{A}) \star \mathcal{B}
\end{equation}
where $ \mathcal{A} \in \mathbb{R}^{n \times n \times n_3} $ and $ \mathcal{B} \in \mathbb{R}^{n \times 1 \times n_3} $, and the function $ f $ is such that the matrix $ f(\bcirc(\mathcal{A})) $ is well-defined.
This method is based on projecting the initial problem \eqref{eq1} onto an increasing sequence of Krylov subspaces via the t-product, transforming the large-scale problem into a reduced problem that can be easily solved by direct methods, with the solution serving as an approximation to the solution of the initial problem.
The proposed method is also applied to approximate parameter-dependent tensor functions of the form
\begin{equation}\label{expA}
	\mathcal{U}(t):=\exp(-t\mathcal{A})\star \mathcal{B},\quad t>0.
\end{equation}

The paper is organized as follows. In section 2, we remind the definition of the matrix Stieltjes functions. Section 3 provides a brief overview of the tensor t-product and several fundamental algebraic properties of third-order tensors. It also recalls the definition of tensor functions as introduced by Lund \cite{Lund}. Section 4 introduces the tensor t-tubal Arnoldi process applied to a class of Stieltjes functions for tensors. Section 5 describes the application
of this process to the approximation of the matrix exponential of the form \eqref{expA}.  Section 5 presents numerical experiments that demonstrate the quality of the computed approximations.
\section{Matrix Stieltjes functions}

Let \( A \in \mathbb{C}^{n \times n} \) be a matrix with spectrum 
\[
\text{spec}(A) := \{ \lambda_j \}_{j=1}^N, 
\]
where \( N \leq n \) and the \( \lambda_j \) are distinct. An \( m \times m \) Jordan block \( J_m(\lambda) \) of an eigenvalue \( \lambda \) has the form
\[
J_m(\lambda) = 
\begin{bmatrix}
	\lambda & 1 & 0 & \cdots & 0 \\
	0 & \lambda & 1 & \cdots & 0 \\
	\vdots & \ddots & \ddots & \ddots & \vdots \\
	0 & \cdots & 0 & \lambda & 1 \\
	0 & \cdots & \cdots & 0 & \lambda
\end{bmatrix} \in \mathbb{C}^{m \times m}.
\] 	
Suppose that \( A \) has Jordan canonical form
\begin{equation}
	A = X J X^{-1} = X^{-1} \operatorname{diag}(J_{m_1}(\lambda_{j_1}), \dots, J_{m_p}(\lambda_{j_p})) X, 
\end{equation}
with \( p \) blocks of sizes \( m_i \), such that \( \sum_{i=1}^p m_i = n \), and where the values \( \{ \lambda_{j_k} \}_{k=1}^p \subseteq \text{spec}(A) \). Note that eigenvalues may be repeated in the sequence \( \{ \lambda_{j_k} \}_{k=1}^p \). Let \( n_j \) denote the index of \( \lambda_j \), that is, the size of the largest Jordan block associated with \( \lambda_j \).  
A function is said to be defined on the spectrum of \( A \) if all of the following quantities exist:

\[
f^{(k)}(\lambda_j),  \quad j = 1, \dots, N,\quad k = 0, \dots, n_j - 1.
\]
\begin{definition}\label{jordan}
	Let \( A \in \mathbb{C}^{n \times n} \) be a matrix with the Jordan canonical form given in (1), and assume that \( f \) is defined on the spectrum of \( A \). We then define
	
	\[
	f(A) := X f(J) X^{-1},
	\]
	where 
	\[
	f(J) := \operatorname{diag}(f(J_{m_1}(\lambda_{j_1})), \dots, f(J_{m_p}(\lambda_{j_p}))).
	\]
	\[
	f(J_{m}(\lambda_{j_k})) := 
	\begin{bmatrix}
		f(\lambda_{j_k}) & f'(\lambda_{j_k}) & \frac{f''(\lambda_{j_k})}{2!} & \cdots & \frac{f^{(m-1)}(\lambda_{j_k})}{(m-1)!} \\
		0 & f(\lambda_{j_k}) & f'(\lambda_{j_k}) & \cdots & \frac{f^{(m-2)}(\lambda_{j_k})}{(m-2)!} \\
		0 & 0 & f(\lambda_{j_k}) & \cdots & \frac{f^{(m-3)}(\lambda_{j_k})}{(m-3)!} \\
		\vdots & \vdots & \ddots & \ddots & \vdots \\
		0 & 0 & \cdots & 0 & f(\lambda_{j_k})
	\end{bmatrix}
	\in \mathbb{C}^{m \times m}.
	\] 	
	Note that if \( A \) is diagonalizable with \( \mathrm{spec}(A) = \{ \lambda_j \}_{j=1}^n \) (not necessarily distinct), then Definition~\ref{jordan} reduces to
	
	\[
	f(A) = X \operatorname{diag}(f(\lambda_1), \dots, f(\lambda_n)) X^{-1}.
	\] 	
	Since matrix powers are well defined, any scalar polynomial can be naturally evaluated on a matrix. Specifically, for 
	\[
	p(z) = \sum_{k=0}^{m} c_k z^k, \quad c_k \in \mathbb{C},
	\]
	we set
	\[
	p(A) := \sum_{k=0}^{m} c_k A^k.
	\]
	This construction provides a basis for defining nonpolynomial functions of matrices, by extending the approach via derivatives as in Definition~\ref{jordan}.
\end{definition} 
\begin{definition}
	Assume that \( f \) is defined on \( \mathrm{spec}(A) \), and let \( p \) be the unique Hermite interpolating polynomial of degree at most \( \sum_{j=1}^N n_j \) satisfying
	
	\[
	p^{(k)}(\lambda_j) = f^{(k)}(\lambda_j), \quad \text{for all } k = 0, \dots, n_j - 1, \quad j = 1, \dots, N.
	\] 	
	We then define \( f(A) := p(A) \).
\end{definition} 	
Let \( \mathbb{D} \subset \mathbb{C} \) be an open domain, and let \( f : \mathbb{D} \to \mathbb{C} \) be an analytic function. We say that \( f \) admits a \emph{Cauchy--Stieltjes integral representation} if there exist
a contour \( \Gamma \subset \mathbb{C} \setminus \bar{\mathbb{D} }\), and
a function \( g : \Gamma \to \mathbb{C} \), integrable along \( \Gamma \),
such that for all \( z \in \mathbb{D} \), we have:
\begin{equation}
	\label{reimaninteg}
	f(z) = \int_{\Gamma} \frac{g(t)}{t - z} \, dt.
\end{equation}
This representation is valid for all \( z \in \mathbb{D} \), i.e., outside the support of \( g \). \\

Now let \( A \in \mathbb{C}^{n \times n} \) be a matrix such that its spectrum \( \sigma(A) \subset \mathbb{D} \). 	The Cauchy-Stieltjes integral formula of the function \( f \) can be extended to matrices through the operator-valued integral:
\begin{equation} 	
	f(A) := \int_{\Gamma} g(t) (tI - A)^{-1} \, dt,
\end{equation}
and the integral is defined in the sense of matrix-valued functions.

The matrix \( (tI - A) \) is invertible if and only if \( t \notin \sigma(A) \), that is,
$
(tI - A)^{-1} $ exists  if and only if $t \in \rho(A),
$
where \( \rho(A) := \mathbb{C} \setminus \sigma(A) \). For the integral defining \( f(A) \) to be well-defined, the integration path \( \Gamma \) must lie entirely within \( \rho(A) \), i.e., must avoid the spectrum of \( A \).

In such cases, classical definitions such as \( f(A) = V f(D) V^{-1} \), valid for diagonalizable matrices \( A = VDV^{-1} \), cannot be applied. Moreover, methods based on the Jordan canonical form may be numerically unstable or computationally expensive for large-scale matrices.	
The Cauchy--Stieltjes integral formula
provides a general and flexible approach to define \( f(A) \), as long as the function \( f \) admits an integral representation and the integration contour \( \Gamma \) lies in the resolvent set of \( A \), i.e., \( \Gamma \cap \sigma(A) = \emptyset \).

This representation is widely used in numerical linear algebra, especially for computing matrix functions such as \( \log(A) \), \( A^\alpha \), or \( \exp(A) \), and forms the basis of various numerical methods, including rational Krylov subspace approximations and quadrature-based techniques.
See, e.g., \cite{Henrici,Lund} for more details about Stieltjes functions.

\section{The tensor t-function }
In  this section, we describes  briefly the notion of   t-function   based on T-product introduced by Lund  \cite{Lund}.
But first, we briefly review the tensor t-product for third-order tensors and some of its fundamental properties (see \cite{klimer3,KM} for further details).  
Let \( \mathcal{A} \in \mathbb{R}^{m \times n \times n_3} \) be a third-order tensor with frontal slices \( \mathcal{A}^{(k)} \in \mathbb{R}^{m \times n} \) for \( k = 1, \dots, n_3 \).
The operations \( \operatorname{{\tt bcirc}} \), \( \operatorname{{\tt unfold}} \), and \( \operatorname{{\tt fold}} \) are defined as follows:

\[
{\tt bcirc}(\mathcal{A}) := \left[\begin{array}{ccccc}
	\mathcal{A}^{(1)} & \mathcal{A}^{(n_3)} & \mathcal{A}^{(n_3-1)} & \cdots & \mathcal{A}^{(2)} \\
	\mathcal{A}^{(2)} & \mathcal{A}^{(1)} & \mathcal{A}^{(n_3)} & \cdots & \mathcal{A}^{(3)} \\
	\vdots & \ddots & \ddots & \ddots & \vdots \\
	\vdots & \ddots & \ddots & \ddots & \vdots \\
	\mathcal{A}^{(n_3)} & \mathcal{A}^{(n_3-1)} & \ldots & \mathcal{A}^{(2)} & \mathcal{A}^{(1)}
\end{array}\right],
\qquad
\operatorname{{\tt unfold}}(\mathcal{A}) = \left[\begin{array}{c}
	\mathcal{A}^{(1)} \\
	\mathcal{A}^{(2)} \\
	\vdots \\
	\vdots \\
	\mathcal{A}^{(n_3)}
\end{array}\right],
\]
and \( \operatorname{{\tt fold}}(\operatorname{{\tt unfold}}(\mathcal{A})) := \mathcal{A} \).

The t-product 
$\mathcal{A}\star\mathcal{B}\in\R^{m\times s\times n_3}$ of $\mathcal{A} \in \mathbb{R}^{m\times n\times n_3}$ and 
$\mathcal{B}\in\mathbb{R}^{n\times s\times n_3}$ is given by 
\[
\mathcal{A}\star\mathcal{B}:=\operatorname{\tt fold}(\operatorname{\tt bcirc}(\mathcal{A})
\operatorname{\tt unfold}(\mathcal{B})).
\]

Introduce the tensor $\bar{\mathcal{A}}\in\mathbb{C}^{m\times n\times n_3}$ whose frontal slices are the diagonal blocks $A_1,\ldots,A_{n_3},$ i.e., 
\[
\bar{\mathcal{A}}=\operatorname{\tt fold}\left[\begin{array}{c}
	A_1\\
	A_2\\
	\vdots \\ 
	A_{n_3}
\end{array}\right].
\] 

\noindent The tensor \( \bar{\mathcal{A}} \) can be obtained by applying the Fast Fourier Transform (FFT) to all the tubes of the tensor \( \mathcal{A} \). This can be done using the Matlab command
\[
\bar{\mathcal{A}} = \texttt{fft}(\mathcal{A}, [ \ ], 3), \quad \text{and} \quad \texttt{ifft}(\bar{\mathcal{A}}, [ \ ], 3) = \mathcal{A},
\]
where \texttt{ifft} denotes the Inverse Fast Fourier Transform.

%
%
%
Note that the operators \texttt{fold}, \texttt{unfold}, and \texttt{bcirc} are linear.\\ 
The conjugate transpose of $\mathcal{A}$  is obtained by transposing each of the frontal slices and then reversing the order of transposed frontal slices 2 through $n_3$. \\
For tensors with \(n \times n\) frontal slices, there is a tensor identity \(\mathcal{I}_{n \times n \times n_3} \in \mathbb{C}^{n \times n \times n_3}\). The tensor \( \mathcal{I}_{n \times n \times n_3} \) has its first frontal slice equal to the \( n \times n \) identity matrix, while all remaining frontal slices are zero matrices. Using \( \mathcal{I}_{n \times n \times n_3} \), one can define the notion of an inverse with respect to the t-product: two tensors \( \mathcal{A}, \mathcal{B} \in \mathbb{C}^{n \times n \times n_3} \) are inverses of each other if 
\[
\mathcal{A} \star \mathcal{B} = \mathcal{I}_{n \times n \times n_3} \quad \text{and} \quad \mathcal{B} \star \mathcal{A} = \mathcal{I}_{n \times n \times n_3}.
\]
Furthermore, the t-product framework allows for a natural definition of tensor polynomials, with powers defined recursively as
\[
\mathcal{A}^j := \underbrace{\mathcal{A} \star \cdots \star \mathcal{A}}_{j \text{ times}}.
\]

The inner product $\mathcal{A},\mathcal{B}\in\R^{m\times n\times n_3}$ is defined by
$$\langle \mathcal{A}, \mathcal{B} \rangle = \displaystyle \sum_{i_1=1}^{m} \sum_{i_2=1}^{n}  \sum_{i_3=1}^{n_3} \mathcal{A}_{i_1 i_2 i_3}\mathcal{B}_{i_1 i_2 i_3}.$$
\\ The Frobenius norm of $\mathcal{A}\in\R^{m\times n\times n_3}$ is defined by
$$ \Vert \mathcal{A} \Vert_F=\displaystyle \sqrt{\langle  \mathcal{A} ,  \mathcal{A}  \rangle}.$$
\\ An $n\times n \times n_{3}$ tensor  $\mathcal{Q}$  is orthogonal if
$$\mathcal{Q}^{T}   \star  \mathcal{Q}=\mathcal{Q} \star \mathcal{Q}^{T}=\mathcal{I}_{ n \times n \times n_{3}}.$$		
\\	A tensor is called f-diagonal if its frontal slices are orthogonal matrices. It is called f-upper triangular if all its frontal slices are upper triangular.

As motivation, Lund \cite{Lund} considers the solution of a multidimensional ordinary differential equation.  
Let \( \mathcal{A} \in \mathbb{C}^{n \times n \times n_3} \) be a third-order tensor with square frontal slices, and let \( B: [0,\infty) \rightarrow \mathbb{C}^{n \times n \times n_3} \) be an unknown tensor-valued function with \( B(0) \) prescribed. We then consider the differential equation

\begin{equation}
	B^\prime(t)=\mathcal{A}\star B(t).
\end{equation}

By unfolding both sides, we obtain

\[\begin{bmatrix}\left( B^\prime(t)\right) ^{(1)}\\ \vdots\\ \left( B^\prime(t)\right) ^{(n)}\end{bmatrix}={\tt bcirc}(\mathcal{A})\begin{bmatrix}B^{(1)}(t)\\ \vdots\\ B^{(n)}(t)\end{bmatrix},\]

whose solution can be represented using the matrix exponential as

\[\begin{bmatrix}B^{(1)}(t)\\ \vdots\\ B^{(n)}(t)\end{bmatrix}=\exp(\bcirc(\mathcal{A})\,t)\begin{bmatrix}B^{(1)}(0)\\ \vdots\\ B^{(n)}(0)\end{bmatrix}.\]

By folding both sides again, one obtains the \emph{tensor t-exponential},
\begin{equation}\label{Expo}
	B(t)=\fold(\exp(\mathcal{A}\,t)\unfold(B(0)))=:\exp(\mathcal{A}\,t)*B(0).
\end{equation}

Inspired by the tensor t-exponential, and assuming that \( f(\mathrm{bcirc}(\mathcal{A})) \) is well-defined, we may introduce a generalized definition for the action of a scalar function \( f \) on a tensor \( \mathcal{A} \in \mathbb{C}^{n \times n \times n_3} \) on another tensor \( \mathcal{B} \in \mathbb{C}^{n \times s \times n_3} \), given by

\begin{equation} \label{tensor function}
	f(\mathcal{A}) \star \mathcal{B} := {\rm fold}\left(f({\rm bcirc}(\mathcal{A})) \, {\rm unfold}(\mathcal{B})\right).
\end{equation}

\medskip
	%
%

\begin{definition}\cite{Lund}\label{Tegndecomp}(Tensor eigndecomposition)
	Let $\mathcal{A}$  be a  tensor in  $ \mathbb{C}^{n\times n\times n_{3}}$ which al its frontal slices  are  diagonalizable. Then, they exist  a tensor  $\mathcal{P}$ and  f-diagonal tensor $\mathcal{D}$     such that  
	\begin{equation}
		\mathcal{A}=\mathcal{P}\star\mathcal{D}\star\mathcal{P}^{-1}
	\end{equation}
	and  we  can  write 
	\begin{equation}\label{vpvp}
		\mathcal{A}\star\overrightarrow{\mathcal{P}} _i= \overrightarrow{\mathcal{P}} _i\star{\bf d}_i\;\; i=1,\ldots,n_3 
	\end{equation}
	where  $\overrightarrow{\mathcal{P}}_i$ denote  de ith lateral  slace  of  the  tensor $\mathcal{P}$   and  ${\bf d}_i$ the  ith  diagonal tube  fiber if  the  tensor $\mathcal{D}$.  We  called the  tube fiber ${\bf d}_i$ the  ith  tubal  eigenvalue  of $\mathcal{A}$  and  $\overrightarrow{\mathcal{P}}_i$ the associeted tubal  eigenvector.   
	
\end{definition}
\begin{theorem}\cite{Lund}
	Let $\mathcal{A}$  be a  tensor in  $ \mathbb{K}^{n\times n\times n_{3}}$  and $ f: \mathbb{C}\mapsto \mathbb{C}  $ be defined on a region in the complex plane containing the spectrum of $\mathcal{A}$. We  have
	\begin{enumerate}
		\item [1] $f (\mathcal{A}^T)=f (\mathcal{A})^T$
		\item[2] If   $\mathcal{A}=\mathcal{P}\star\mathcal{D}\star\mathcal{P}^ {-1} $ then  $f(\mathcal{A})=\mathcal{P}\star f (\mathcal{D})\star\mathcal{P}^ {-1}$ 
		\item [3]    If $\mathcal{A}\star \overrightarrow{\mathcal{P}}  = \overrightarrow{\mathcal{P}} \star{\bf d} $ then   $f(\mathcal{A})\star \overrightarrow{\mathcal{P}}  = \overrightarrow{\mathcal{P}} \star f({\bf d}). $   	 	 	\end{enumerate}
\end{theorem}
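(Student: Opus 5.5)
The plan is to reduce all three statements to facts about matrix functions, via the map $\bcirc$. The key tool is that $\bcirc$ is linear, injective and multiplicative:
\[
\bcirc(\mathcal{A}\star\mathcal{B})=\bcirc(\mathcal{A})\,\bcirc(\mathcal{B}).
\]
From this, $\bcirc(\mathcal{I}_{n\times n\times n_3})=I$, $\bcirc(\mathcal{A}^{-1})=\bcirc(\mathcal{A})^{-1}$ and $\bcirc(\mathcal{A}^T)=\bcirc(\mathcal{A})^T$. The last identity follows because reversing slices $2,\dots,n_3$ after transposing each slice is exactly block transposition of the block circulant matrix.

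Following definition \eqref{tensor function}, I interpret $f(\mathcal{A})$ as the tensor with $\bcirc(f(\mathcal{A}))=f(\bcirc(\mathcal{A}))$, and $f(\mathcal{A})\star\mathcal{B}=\fold(f(\bcirc(\mathcal{A}))\unfold(\mathcal{B}))$. For this to be a genuine tensor I will check that $f(\bcirc(\mathcal{A}))$ is again block circulant. The argument is that $f(M)=p(M)$ for the Hermite interpolating polynomial $p$ of Definition 2, and polynomials in a block circulant matrix are block circulant.

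\textbf{Item 1.} By Definition 2, $f(M^T)=p(M^T)=p(M)^T$, since $M$ and $M^T$ have the same Jordan structure and hence the same interpolating polynomial. Apply this to $M=\bcirc(\mathcal{A})$ and use $\bcirc(\mathcal{A}^T)=\bcirc(\mathcal{A})^T$. Injectivity of $\bcirc$ then gives $f(\mathcal{A}^T)=f(\mathcal{A})^T$.

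\textbf{Item 2.} Applying $\bcirc$ to $\mathcal{A}=\mathcal{P}\star\mathcal{D}\star\mathcal{P}^{-1}$ gives $\bcirc(\mathcal{A})=PDP^{-1}$, where $P=\bcirc(\mathcal{P})$ and $D=\bcirc(\mathcal{D})$. The standard similarity invariance $f(PDP^{-1})=Pf(D)P^{-1}$ follows from the polynomial definition, since $p(PDP^{-1})=Pp(D)P^{-1}$ and similar matrices share a spectrum and Jordan structure. Translating back through $\bcirc$ yields the claim.

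\textbf{Item 3.} Let $M=\bcirc(\mathcal{A})$ and let $X=\unfold(\overrightarrow{\mathcal{P}})$, which is an $nn_3\times 1$ vector. Taking $\bcirc$ (equivalently $\unfold$) of the relation $\mathcal{A}\star\overrightarrow{\mathcal{P}}=\overrightarrow{\mathcal{P}}\star\mathbf d$ gives $M\,\bcirc(\overrightarrow{\mathcal{P}})=\bcirc(\overrightarrow{\mathcal{P}})\,\bcirc(\mathbf d)$, where $\bcirc(\mathbf d)$ is an $n_3\times n_3$ circulant. Writing $Q=\bcirc(\overrightarrow{\mathcal{P}})$ and $C=\bcirc(\mathbf d)$, this is an intertwining relation $MQ=QC$. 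It implies $M^kQ=QC^k$ for all $k$, hence $p(M)Q=Qp(C)$ for every polynomial.

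\textbf{Main obstacle.} I expect the main difficulty here. The identity $p(M)Q=Qp(C)$ only gives $f(M)Q=Qf(C)$ if the same interpolating polynomial $p$ works for both $M$ and $C$. To secure this I will take $p$ to interpolate $f$ on the union of the spectra of $M$ and $C$, with multiplicities up to the larger index. Then $p(M)=f(M)$ and $p(C)=f(C)$, which requires $f$ to be defined on $\operatorname{spec}(C)$. I will argue this from the intertwining: if $Q$ has full column rank (which I will assume, as for an eigenvector), then $\operatorname{spec}(C)\subseteq\operatorname{spec}(M)$ with no larger Jordan indices. Folding back via $\bcirc(f(\mathbf d))=f(C)$ gives $f(\mathcal{A})\star\overrightarrow{\mathcal{P}}=\overrightarrow{\mathcal{P}}\star f(\mathbf d)$.
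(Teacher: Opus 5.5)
Your proof is correct. The paper gives no proof of this theorem; it is quoted from Lund, so there is no in-paper argument to match. Your reduction through $\bcirc$ (multiplicativity, $\bcirc(\mathcal{A}^T)=\bcirc(\mathcal{A})^T$, injectivity) to standard matrix-function identities is the natural route.

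Two of your choices differ from the obvious alternative built from the paper's own tools:
\begin{itemize}
\item \textbf{Block circulant structure.} You show $f(\bcirc(\mathcal{A}))$ is block circulant via the Hermite-polynomial representation and the fact that block circulant matrices form an algebra. The alternative is the Fourier block diagonalization of $\bcirc(\mathcal{A})$ used in the numerical section. Your version avoids the Fourier transform entirely.
\item \textbf{Item 3.} You use a single intertwining relation $MQ=QC$ and a common interpolating polynomial. The alternative deduces item 3 from item 2 by reading $f(\mathcal{A})\star\mathcal{P}=\mathcal{P}\star f(\mathcal{D})$ lateral slice by lateral slice. That would also require showing that $f(\mathcal{D})$ is f-diagonal with diagonal tubes $f(\mathbf{d}_i)$. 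Your route needs only one tubal eigenpair, at the cost of assuming $Q=\bcirc(\overrightarrow{\mathcal{P}})$ has full column rank.
\end{itemize}

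That rank hypothesis costs nothing in the setting of Definition~\ref{Tegndecomp}. There $\bcirc(\overrightarrow{\mathcal{P}}_i)$ consists of columns $i, n+i, \dots, (n_3-1)n+i$ of the invertible matrix $\bcirc(\mathcal{P})$.

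Some such hypothesis is genuinely needed for the statement as written. Full column rank of $Q$ is equivalent to every Fourier component of $\overrightarrow{\mathcal{P}}$ being nonzero. In a Fourier mode where $\overrightarrow{\mathcal{P}}$ vanishes, the corresponding component of $\mathbf{d}$ is unconstrained, so $f(\mathbf{d})$ may be undefined. Whenever $f(\mathbf{d})$ is defined, your union-interpolation argument gives the identity anyway.

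Finally, reading ${}^T$ as the plain transpose, as you do, is what makes item 1 hold for every $f$. With the conjugate transpose (the paper's wording), one would also need $f(\bar z)=\overline{f(z)}$.
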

To make the tensor t-function practically usable, it is essential to develop efficient numerical methods for approximating expressions of the form \( f(\mathcal{A}) \star\mathcal{B} \) with $\mathcal{A}\in\mathbb{C}^{n\times n\times n_3}$ and $\mathcal{B}\in\mathbb{C}^{n\times m\times n_3}$.  While approaches such as the t-eigendecomposition and t-Krylov methods proposed by Kilmer et al.~\cite{klimer3} are viable, performing a full eigendecomposition may become computationally prohibitive for large-scale tensors. Moreover, constructing specialized t-Krylov schemes might be redundant, due to the known equivalence between the \( f(\mathcal{A}) \star \mathcal{B} \) and \( f(A)B \) problems. In this work, we are interested in case where $m=1$. The following section provide a tensor method for tensor Stieltjes type method.

\section{ The tensor Arnoldi  method for Stieltjes functions of tensors}

In this section, we describe the tensor t-tubal Arnoldi method for generating an orthonormal basis for the tensor tubal Krylov subspace, denoted by \( \mathcal{TK}_{m}(\mathcal{A}, \mathcal{V}) \). This subspace is defined as
\begin{align} \label{ttgk}
	\mathcal{TK}_m(\mathcal{A}, \mathcal{V}) &= \text{Tubal-Span}\left\lbrace {\mathcal{V}}, \mathcal{A} \star {\mathcal{V}}, \ldots, \mathcal{A}^{m-1} \star {\mathcal{V}} \right\rbrace \\
	&= \left\lbrace \mathcal{Z} \in \mathbb{R}^{n \times 1 \times n_3}, \mathcal{Z} = \sum_{i=1}^m \left( \mathcal{A}^{i-1} \star {\mathcal{V}} \right) \star \mathbf{a}_i \right\rbrace \nonumber
\end{align}
where \( \mathbf{a}_i \in \mathbb{R}^{1 \times 1 \times n_3} \). The following algorithm \cite{klimer3} applied to \( \mathcal{V} \in \mathbb{R}^{n \times 1 \times n_3} \) allows us to obtain an orthonormal tensor \( \mathcal{Q} \in \mathbb{R}^{n \times 1 \times n_3} \), and also yields the decomposition
\[
\mathcal{V} = \mathcal{Q} \star \mathbf{a}.
\]

\begin{algorithm}[H]
	\caption{The normalization algorithm \cite{klimer3}}\label{normalization12}
	\textbf{Input:} $ {\mathcal{A}} \in {\mathbb R}^{n\times 1\times n_3}$.
	\begin{enumerate}
		\item Set $\mathcal{\widetilde{A}}={\tt fft}(\mathcal {A},[ ],3)$.
		\item for $j=1:n_3$
		\begin{enumerate}
			\item  ${\rm \bf a}^{(j)}= 				 	 ||{A}^{ (j)}||_2$.
			\item if ${\rm \bf a}^{(j)}<tol$
			\begin{itemize}
				\item[] Stop
				\item[] else
				\item[] ${\mathcal {Q}}^{(j)}=\frac{ { A}^{(j)} }{{\rm \bf a}^{(j)}} $.
				\item[] endif 
			\end{itemize}
			\item $ \mathcal{Q}  ={\tt ifft}(\mathcal {Q},[ ],3) $, $ {\rm \bf a}  ={\tt ifft}({\rm \bf a},[ ],3) $.
		\end{enumerate}
	\end{enumerate}
	\textbf{Outputs:} ${\mathcal{Q}}\in {\mathbb R}^{n\times 1\times n_3 }$ and  ${\rm \bf a}\in {\mathbb R}^{1\times 1 \times n_3 }$.
\end{algorithm}

To generate a new basis element \( \mathcal{V}_{j+1} \) in the tensor t-tubal Arnoldi procedure, one must explicitly orthogonalize it against all existing basis tensors \( \mathcal{V}_1, \mathcal{V}_2, \ldots, \mathcal{V}_j \). This process is carried out through a recurrence relation given by:

\begin{equation}\label{VjA}
	\begin{array}{rcl}
		\mathcal{V} &=& \mathcal{V}_1 \star \mathbf{r}_1\\
		\mathcal{V}_{j+1}\star \mathbf{h}_{j+1,j}&=&\mathcal{A}\star\mathcal{V}_j-\sum\limits_{i=1}^j \mathcal{V}_i\star \mathbf{h}_{i,j}=:\mathcal{W}_{j+1},\quad j=1,2,\ldots,k.
	\end{array}
\end{equation}
The tubal coefficients $\mathbf{h}_{i,j} \in \mathbb{R}^{1 \times 1 \times n_3}$ are computed to ensure that the tensors $\mathcal{V}_1, \mathcal{V}_2, \ldots, \mathcal{V}_{j+1}$ are orthonormal. Specifically, these coefficients are determined as follows:
\[
{\rm \bf h}_{i,j}=  {\mathcal{V}}_i^T\star \mathcal{A}\star\mathcal{V}_j,\quad 1\leq i\leq j,\quad
\mathcal{W}_{j+1}=\mathcal{V}_{j+1}\star{\rm \bf h}_{j+1,j}.
\]

\noindent The tensor t-tubal Arnoldi method is summarized in Algorithm \ref{TTA}.

\begin{algorithm}[H]
	\caption{The tensor t-tubal Arnoldi algorithm} \label{TTA}
	\textbf{Inputs:} $\mathcal{A}\in \mathbb{R}^{n\times n\times n_3 }$, ${\mathcal{V}}\in \mathbb{R}^{n\times 1\times n_3 }$ and the number of steps $m$.
	\begin{enumerate}
		\item Set $[\mathcal{V}_{1}, {\rm \bf r}_{1}]=  \text{Normalization}(\mathcal{V})$.
		\item for $j=1:m$
		\begin{enumerate}
			\item  ${\mathcal{W}}=  \mathcal{A}\star   {\mathcal{V}}_j$.
			\item for $i=1:j$
			\begin{itemize}
				\item[] $ {\rm \bf h}_{i,j}=  {\mathcal{V}}_i^T\star {\mathcal{W}}  $.
				\item[]  $\mathcal{W}=\mathcal{W}-   {\mathcal{V}}_i\star{\rm \bf h}_{i,j}$.
				\item[]endfor 
			\end{itemize}
			\item $[{\mathcal{V}}_{j+1}, {\rm \bf h}_{j+1,j}]=  \text{Normalization}({\mathcal{W}} )$.
			\item endfor
		\end{enumerate} 
	\end{enumerate}
	\textbf{Output:} The tensor t-tubal Arnoldi decomposition \eqref{decompp}.
\end{algorithm}

\noindent It follows from the recursion formulas of Algorithm \ref{TTA} the decomposition
\begin{align}
	\label{decompp}
	\mathcal{A} \star \mathbb{V}_m = \mathbb{V}_m \star \mathcal{H}_m + \mathcal{V}_{m+1} \star (\mathbf{h}_{m+1,m} \star \mathcal{E}_m),
\end{align}
where
\[
\mathbb{V}_m := \left[ \mathcal{V}_1, \ldots, \mathcal{V}_m \right] \in \mathbb{R}^{n \times m \times n_3}, \quad \mathbb{V}_m^T \star \mathbb{V}_m = \mathcal{I}_{m \times m \times n_3}.
\]
The tensor \( \mathcal{H}_m \) is defined as
\[
\mathcal{H}_m = \left[ \begin{array}{*{20}{c}}
	\mathbf{h}_{1,1} & \mathbf{h}_{1,2} & \cdots & \mathbf{h}_{1,m} \\
	\mathbf{h}_{2,1} & \mathbf{h}_{2,2} & \cdots & \mathbf{h}_{2,m} \\
	& \ddots & \ddots & \vdots \\
	& & \mathbf{h}_{m,m-1} & \mathbf{h}_{m,m}
\end{array} \right] \in \mathbb{R}^{m \times m \times n_3}.
\]
The tensor \( \mathcal{E}_m = \left[ \mathcal{O}_{1 \times 1 \times n_3}, \ldots, \mathcal{O}_{1 \times 1 \times n_3}, \mathbf{e} \right] \in \mathbb{R}^{1 \times m \times n_3} \) where \( \mathcal{O}_{1 \times 1 \times n_3} \) is the zero tube fiber and \( \mathbf{e} \in \mathbb{R}^{1 \times 1 \times n_3} \) is the tube fiber such that \( \unfold(\mathbf{e}) = (0, 0, 0, \ldots, 1)^T \). Moreover, the tensor \( \mathcal{H}_m \) satisfies \( \mathcal{H}_m = \mathbb{V}_m^T \star A \star \mathbb{V}_m \). The   Krylov subspace  $\mathcal{TK} _{m}(\mathcal{A},{\mathcal{V}} )$ defined in equation (\ref{ttgk}) can be also expressed  as 
\begin{equation}\label{krylovexpr2}
	\mathcal{TK}_{m}(\mathcal{A},{\mathcal{V}}) =\left\lbrace {\mathcal{X}}\in  \mathbb{R}^{n\times 1 \times n_3} /
	{\mathcal{X}}=\mathbb{V}_m \star{\mathcal{Y}} \text{ where } {\mathcal{Y}} \in \mathbb{R}^{m\times 1 \times n_3}\right\rbrace .
\end{equation}

\medskip
\noindent

We are interested in the approximation of \( \mathcal{I}(f) \) in \eqref{eq1} using the tensor t-tubal Arnoldi method, where \( f \) has a Stieltjes integral representation.   To do this, we consider the following result.    
\begin{proposition}
	Let \( \mathcal{A} \in \mathbb{C}^{n \times n \times n_3} \), and let \( \mathcal{I}_{n \times n \times n_3} \) be the identity tensor under the t-product. Then the tensor \( \mathcal{A} - t \mathcal{I}_{n \times n \times n_3} \) is invertible under the t-product if and only if the scalar \( t \in \mathbb{C} \) does not belong to the spectrum of any of the frontal slices \( \bar{\mathcal{A}}^{(k)} \) of the Fourier-transformed tensor \( \bar{\mathcal{A}}\). In other words,
	\[
	\mathcal{A} - t \mathcal{I}_{n \times n \times n_3} \text{ is invertible } \iff \forall k = 1, \dots, n_3,\quad t \notin \sigma(\bar{\mathcal{A}}^{(k)}).
	\]
\end{proposition}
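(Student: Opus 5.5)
The plan is to turn invertibility under the t-product into ordinary matrix invertibility, and then to block-diagonalize with the discrete Fourier transform.

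\textbf{Step 1: reduction to $\bcirc$.} First I will show that a tensor $\mathcal{M}\in\mathbb{C}^{n\times n\times n_3}$ is t-invertible if and only if $\bcirc(\mathcal{M})$ is an invertible matrix. This uses two facts:
\[
\bcirc(\mathcal{M}\star\mathcal{N})=\bcirc(\mathcal{M})\,\bcirc(\mathcal{N}), \qquad \bcirc(\mathcal{I}_{n\times n\times n_3})=I_{nn_3}.
\]
The first holds because a product of block circulant matrices is again block circulant, and its first block column is $\bcirc(\mathcal{M})\,\unfold(\mathcal{N})$. This is exactly $\unfold(\mathcal{M}\star\mathcal{N})$ by the definition of the t-product.

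The two directions then go as follows.
\begin{itemize}
\item If $\mathcal{M}\star\mathcal{N}=\mathcal{N}\star\mathcal{M}=\mathcal{I}$, applying $\bcirc$ shows that $\bcirc(\mathcal{N})$ is the matrix inverse of $\bcirc(\mathcal{M})$.
\item Conversely, suppose $\bcirc(\mathcal{M})$ is invertible. Its inverse commutes with the block cyclic shift $Z\otimes I_n$, because $\bcirc(\mathcal{M})$ does. Hence the inverse is block circulant, so it equals $\bcirc(\mathcal{N})$ for some $\mathcal{N}$, and $\mathcal{N}$ is a two-sided t-inverse.
\end{itemize}

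\textbf{Step 2: Fourier block diagonalization.} Next I will use the standard identity
\[
(F_{n_3}\otimes I_n)\,\bcirc(\mathcal{M})\,(F_{n_3}^{-1}\otimes I_n)=\operatorname{diag}\big(\bar{\mathcal{M}}^{(1)},\dots,\bar{\mathcal{M}}^{(n_3)}\big),
\]
where $F_{n_3}$ is the DFT matrix. The paper already uses this implicitly when it defines $\bar{\mathcal{A}}$ as having frontal slices equal to the diagonal blocks, computed by \texttt{fft} along the third mode.

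To verify it, write $\bcirc(\mathcal{M})=\sum_k Z^{k-1}\otimes \mathcal{M}^{(k)}$, with $Z$ the cyclic shift. The DFT diagonalizes $Z$ as $F Z F^{-1}=\operatorname{diag}(\omega^{j})$. Collecting terms, the $j$-th diagonal block is $\sum_k \omega^{(j)(k-1)}\mathcal{M}^{(k)}$, which is precisely the $j$-th slice of the FFT along tubes. I only need to fix one sign/convention for $\omega$ consistent with MATLAB's \texttt{fft}; the other convention just permutes the blocks and does not affect the conclusion.

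\textbf{Step 3: apply to $\mathcal{M}=\mathcal{A}-t\mathcal{I}$.} By linearity of \texttt{fft}, and because the transform of $\mathcal{I}$ has every slice equal to $I_n$ (its tubes are $e_1$, whose DFT is all ones), we get
\[
\bar{\mathcal{M}}^{(k)}=\bar{\mathcal{A}}^{(k)}-tI_n .
\]
Invertibility is preserved under the similarity of Step 2, and a block diagonal matrix is invertible if and only if every block is. So $\mathcal{A}-t\mathcal{I}$ is t-invertible if and only if $\det(\bar{\mathcal{A}}^{(k)}-tI_n)\neq0$ for all $k$, that is, $t\notin\sigma(\bar{\mathcal{A}}^{(k)})$ for all $k$.

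\textbf{Main obstacle.} The main point of care is the converse in Step 1: showing that the matrix inverse of $\bcirc(\mathcal{M})$ is itself block circulant. An alternative route avoids this. In the Fourier domain, define $\mathcal{N}$ slicewise by $\bar{\mathcal{N}}^{(k)}=(\bar{\mathcal{M}}^{(k)})^{-1}$ and take the inverse FFT. Since the t-product corresponds to slicewise multiplication in the Fourier domain (again by Step 2), this $\mathcal{N}$ is the t-inverse. I would use this second argument, since it reuses the same diagonalization identity.
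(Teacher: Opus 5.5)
Your proposal is correct and follows the same route as the paper: pass to the Fourier domain, note that the transformed slices are $\bar{\mathcal{A}}^{(k)} - tI_n$, and use that t-invertibility is equivalent to invertibility of every Fourier slice. The paper asserts that last equivalence simply ``by the definition of the t-product,'' whereas you justify it, either via the \texttt{bcirc} homomorphism with DFT block diagonalization or via the slicewise-inverse construction, so your version is the more complete argument.
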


\begin{proof}
	Applying the discrete Fourier transform along the third dimension gives
	\[
	\bar{\mathcal{A}} = \texttt{fft}(\mathcal{A}, [], 3), \quad
	\bar{\mathcal{I}}_{n \times n \times n_3} = \texttt{fft}(\mathcal{I}_{n \times n \times n_3}, [], 3),
	\]
	and it is known that \( \bar{\mathcal{I}}^{(k)} = I_{n\times n} \) for all \( k = 1, \dots, n_3 \). Therefore,
	\[
	\texttt{fft}(\mathcal{A} - t \mathcal{I}_{n \times n \times n_3}) = \bar{\mathcal{A}} - t \bar{\mathcal{I}}_{n \times n \times n_3}= \left\{ \bar{\mathcal{A}}^{(k)} - t I_{n \times n } \right\}_{k=1}^{n_3}.
	\]
	By the definition of the t-product, the tensor \( \mathcal{A} - t \mathcal{I}_{n \times n \times n_3} \) is invertible if and only if each matrix \( \bar{\mathcal{A}}^{(k)} - t I _{n \times n }\) is invertible. This holds if and only if \( t \notin \sigma(\bar{\mathcal{A}}^{(k)}) \) for all \( k = 1, \dots, n_3 \), where \( \sigma(\cdot) \) denotes the set of eigenvalues (the spectrum).
\end{proof}

Given a Stieltjes function \( f \), defined on the complex plane and excluding non-positive real values, which contains the t-eigenvalues of \( \mathcal{A} \). Then under the invertibility condition of tensor $ \mathcal{A} - t \mathcal{I}_{n \times n \times n_3}$ and according to equation \eqref{reimaninteg}, \( \mathcal{I}(f) \) for the class of Stieltjes functions can be extended as follows:

\begin{align*}
	\mathcal{I}(f)=f(\mathcal{A})\star {\mathcal{B}}=
	\int_{0}^{\infty} (\mathcal{A}+t\;\mathcal{I}_{n\times n\times n_3} )^{-1}\star{\mathcal{B}}\;d\mu(t).
\end{align*}
Observe that \( \mathcal{I}(f) \) is closely related to the following shifted tensor linear system

\begin{align}
	\label{shs}(\mathcal{A}+t\mathcal{I}_{n\times n\times n_3})\star\mathcal{X}(t)=\mathcal{B},\quad t\in(0,\infty),\quad \mathcal{X}(t)\in\mathbb{R}^{n\times 1\times n_3}.   
\end{align}

It is natural to approximate the quantity \( \mathcal{I}(f) \) by

\begin{align}\label{FFm}\mathcal{F}_m:=\int_{0}^{\infty}  {\mathcal{X}}_m(t)  \;d\mu(t),
\end{align}
where \( {\mathcal{X}}_m(t) \in \mathbb{R}^{n \times 1 \times n_3} \) can be interpreted as the approximate solution of \eqref{shs}. Let \( {\mathcal{R}}_0(t) \) be the corresponding residual associated with the initial guess \( \mathcal{X}_0 \), i.e., 
\[
{\mathcal{R}}_0(t) := {\mathcal{B}} - (\mathcal{A} + t \mathcal{I}_{n \times n \times n_3}) \star {\mathcal{X}}_0.
\]
Carry out \( m \) steps of the tensor tubal Arnoldi method on the pair \( (\mathcal{A}, \mathcal{R}_0(t)) \). The expression for \( \mathcal{X}_m(t) \) can be written as follows:
\begin{equation}
	\label{fom1}
	{\mathcal{X}}_{m}(t):={\mathcal{X}}_{0}+\mathbb{V}_m \star \mathcal{Z}_m(t), 
\end{equation}  
\(\mathcal{Z}_m(t) \in \mathbb{R}^{m \times 1 \times n_3}\) is determined such that the new residual \(\mathcal{R}_m(t) = \mathcal{B} - (\mathcal{A} + t  \mathcal{I}_{n\times n\times n_3}) \star \mathcal{X}_m(t)\) associated with \(\mathcal{X}_m(t)\) is t-orthogonal to \(\mathcal{TK}_m(\mathcal{A}, \mathcal{R}_0)\), i.e., \(\mathcal{R}_m(t) \perp \mathcal{TK}_m(\mathcal{A}, \mathcal{R}_0)\). This yields
\begin{equation}
	\label{fom2} 
	\mathbb{V}_m^T \star \mathcal{R}_m(t) = 0.
\end{equation}
According to equation \eqref{decompp}, we obtain the shifted decomposition
\begin{align*}
	(\mathcal{A}+t\,\mathcal{I}_{n\times n\times n_3}) \star \mathbb{V}_m = \mathbb{V}_m \star (\mathcal{H}_m + t\, \mathcal{I}_{m\times m\times n_3}) + \mathcal{V}_{m+1} \star \left( {\tt h}_{m+1,m} \star \mathcal{E}_m \right).
\end{align*}
Starting from equation \eqref{fom2} and using \eqref{fom1} along with the shifted decomposition, we get
\[
(\mathcal{H}_m + t\, \mathcal{I}_{m\times m\times n_3}) \star \mathcal{Z}_m(t) = \mathbb{V}_m^T \star \mathcal{R}_0(t).
\]
Finally, the reduced tensor linear system can be written as
\begin{align}\label{reSy}
	(\mathcal{H}_m + t\, \mathcal{I}_{m\times m\times n_3}) \star \mathcal{Z}_m(t) = \mathcal{E}_1^{(m)} \star {\rm \bf r}_1,
\end{align}
since \(\mathcal{R}_0(t) = \mathbb{V}_m \star \mathcal{E}_1^{(m)} \star {\rm \bf r}_1\).
\begin{proposition}\label{residufom}
	Let $\mathcal{A} \in \mathbb{R}^{n \times n \times n_3}$ and ${\mathcal{B}} \in \mathbb{R}^{n \times 1 \times n_3}$. The residual ${\mathcal{R}}_m(t) = {\mathcal{B}} - (\mathcal{A} + t \, \mathcal{I}_{n\times n\times n_3}) \star {\mathcal{X}}_m(t)$ associated with the approximate solution of \eqref{shs} obtained after $m$ steps of the tensor t-tubal Arnoldi method is given by
	\begin{equation}\label{RRm}
		{\mathcal{R}}_m(t) = -{\mathcal{V}}_{m+1} \star ({\tt h}_{m+1,m} \star \mathcal{E}_m) \star \mathcal{Z}_m(t),
	\end{equation}
	where ${\tt h}_{m+1,m}$ and $\mathcal{E}_m$ are defined below \eqref{decompp}.
\end{proposition}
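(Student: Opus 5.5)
The plan is a direct substitution: insert the definition \eqref{fom1} of $\mathcal{X}_m(t)$ into the residual, then apply the shifted Arnoldi decomposition. Expanding the residual gives
\[
\mathcal{R}_m(t) = \mathcal{B} - (\mathcal{A}+t\,\mathcal{I}_{n\times n\times n_3})\star\mathcal{X}_0 - (\mathcal{A}+t\,\mathcal{I}_{n\times n\times n_3})\star\mathbb{V}_m\star\mathcal{Z}_m(t) = \mathcal{R}_0(t) - (\mathcal{A}+t\,\mathcal{I}_{n\times n\times n_3})\star\mathbb{V}_m\star\mathcal{Z}_m(t).
\]
This step uses only linearity and associativity of the t-product. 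Both properties follow from $\mathcal{A}\star\mathcal{B}=\fold(\bcirc(\mathcal{A})\unfold(\mathcal{B}))$ together with $\bcirc(\mathcal{A}\star\mathcal{B})=\bcirc(\mathcal{A})\bcirc(\mathcal{B})$.

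Next, replace $(\mathcal{A}+t\,\mathcal{I}_{n\times n\times n_3})\star\mathbb{V}_m$ by the right-hand side of the shifted decomposition derived from \eqref{decompp}. That decomposition holds because $\mathcal{I}_{n\times n\times n_3}\star\mathbb{V}_m=\mathbb{V}_m=\mathbb{V}_m\star\mathcal{I}_{m\times m\times n_3}$. The residual becomes
\[
\mathcal{R}_m(t)=\mathcal{R}_0(t)-\mathbb{V}_m\star(\mathcal{H}_m+t\,\mathcal{I}_{m\times m\times n_3})\star\mathcal{Z}_m(t)-\mathcal{V}_{m+1}\star({\tt h}_{m+1,m}\star\mathcal{E}_m)\star\mathcal{Z}_m(t).
\]
Then use the starting relation $\mathcal{R}_0(t)=\mathbb{V}_m\star\mathcal{E}_1^{(m)}\star{\rm \bf r}_1$, which holds because $\mathcal{V}_1$ is the normalized starting tensor from Algorithm \ref{normalization12}. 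Together with the reduced system \eqref{reSy}, the first two terms become $\mathbb{V}_m\star\bigl(\mathcal{E}_1^{(m)}\star{\rm \bf r}_1-(\mathcal{H}_m+t\,\mathcal{I}_{m\times m\times n_3})\star\mathcal{Z}_m(t)\bigr)=0$. Only the last term remains, and that is exactly \eqref{RRm}.

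The main point needing care is that the Krylov basis is built from $\mathcal{R}_0(t)$, which depends on $t$. The relation $\mathcal{R}_0(t)=\mathbb{V}_m\star\mathcal{E}_1^{(m)}\star{\rm \bf r}_1$ therefore has to be read for the fixed $t$ under consideration, or for $\mathcal{X}_0=0$ so that $\mathcal{R}_0=\mathcal{B}$ is independent of $t$. Either reading is consistent with how \eqref{reSy} was derived.

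Also, \eqref{reSy} must actually be solvable, so that $\mathcal{Z}_m(t)$ exists. This requires $\mathcal{H}_m+t\,\mathcal{I}_{m\times m\times n_3}$ to be t-invertible. By the preceding proposition, that holds if and only if $-t$ avoids the spectra of the Fourier slices $\bar{\mathcal{H}}_m^{(k)}$, and I would simply assume this, as the paper implicitly does.

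Finally, the orthonormality $\mathbb{V}_m^T\star\mathbb{V}_m=\mathcal{I}_{m\times m\times n_3}$ is not needed for the identity itself. It is needed only to pass from the Galerkin condition \eqref{fom2} to \eqref{reSy}, and that step is already established before the statement.
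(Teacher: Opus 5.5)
Your proposal is correct and is the paper's own argument. You expand $\mathcal{X}_m(t)=\mathcal{X}_0+\mathbb{V}_m\star\mathcal{Z}_m(t)$ in the residual, apply the shifted Arnoldi decomposition, and use \eqref{reSy} to cancel $\mathcal{R}_0(t)=\mathbb{V}_m\star\mathcal{E}_1^{(m)}\star{\rm \bf r}_1$ against $\mathbb{V}_m\star(\mathcal{H}_m+t\,\mathcal{I}_{m\times m\times n_3})\star\mathcal{Z}_m(t)$, leaving only the $\mathcal{V}_{m+1}$ term. Your added caveats, on the $t$-dependence of the starting residual and on the solvability of the reduced system, are sensible points that the paper leaves implicit.
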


\begin{proof}
	We use equation \eqref{reSy} in this proof and the fact that ${\mathcal{R}}_0(t) = \mathbb{V}_m \star \mathcal{E}_1^{(m)} \star {\rm \bf r}_1$:
	\begin{align*}
		{\mathcal{R}}_m(t) &= {\mathcal{B}} - (\mathcal{A} + t \, \mathcal{I}_{n\times n\times n_3}) \star {\mathcal{X}}_m(t) \\
		&= {\mathcal{B}} - (\mathcal{A} + t \, \mathcal{I}_{n\times n\times n_3}) \star \left( {\mathcal{X}}_0 + \mathbb{V}_m \star \mathcal{Z}_m(t) \right) \\
		&= \mathcal{R}_0(t) - (\mathcal{A} + t \, \mathcal{I}_{n\times n\times n_3}) \star \mathbb{V}_m \star \mathcal{Z}_m(t) \\
		&= \mathcal{R}_0(t) - \mathbb{V}_m \star (\mathcal{H}_m + t \, \mathcal{I}_{m\times m\times n_3}) \star \mathcal{Z}_m(t) - {\mathcal{V}}_{m+1} \star ({\tt h}_{m+1,m} \star \mathcal{E}_m) \star \mathcal{Z}_m(t) \\
		&= \mathcal{R}_0(t) - \mathbb{V}_m \star \mathcal{E}_1^{(m)} \star {\rm \bf r}_1 - {\mathcal{V}}_{m+1} \star ({\tt h}_{m+1,m} \star \mathcal{E}_m) \star \mathcal{Z}_m(t) \\
		&= - {\mathcal{V}}_{m+1} \star ({\tt h}_{m+1,m} \star \mathcal{E}_m) \star \mathcal{Z}_m(t).
	\end{align*}
	This completes the proof.
\end{proof}

\noindent   In particular, if the initial guess $\mathcal{X}_0$ is set to be zero, the approximate solution ${\mathcal{X}}_m(t)$ of the tensor shifted linear system \eqref{shs} is expressed as
\begin{equation}\label{tfomshift}
	{\mathcal{X}}_m(t) = \mathbb{V}_m \star (\mathcal{H}_m + t \mathcal{I}_{m\times m\times n_3})^{-1} \star \mathcal{E}_1^{(m)} \star {\rm \bf r}_1, \quad t > 0.
\end{equation}

\begin{theorem}
	Let $\mathcal{A} \in \mathbb{R}^{n \times n \times n_3}$ and ${\mathcal{B}} \in \mathbb{R}^{n \times 1 \times n_3}$. Then, the approximate expression of $f(\mathcal{A}) \star \mathcal{B}$ for the Stieltjes function after $m$ steps of the tensor t-tubal Arnoldi method is given by
	\begin{equation}\label{FFFm}
		{\mathcal{F}}_m = \mathbb{V}_m \star f(\mathcal{H}_m) \star \mathcal{E}_1^{(m)} \star {\rm \bf r}_1.
	\end{equation}
	Moreover, the $m$th error of this approximation is
	\begin{align}\label{error}
		f(\mathcal{A}) \star {\mathcal{B}} - \mathcal{F}_m = \int_{0}^{\infty} (\mathcal{A} + t \, \mathcal{I}_{n\times n\times n_3})^{-1} \star {\mathcal{R}}_m(t) \, d\mu(t),
	\end{align}
	where ${\mathcal{R}}_m(t)$ is defined in \eqref{RRm}.
\end{theorem}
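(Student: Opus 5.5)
The plan is to work throughout with zero initial guess $\mathcal{X}_0=0$, so that $\mathcal{R}_0(t)=\mathcal{B}=\mathcal{V}_1\star\mathbf{r}_1$ no longer depends on $t$. Then a single run of Algorithm~\ref{TTA} on $(\mathcal{A},\mathcal{B})$ serves every shift, and \eqref{tfomshift} holds for all $t>0$.

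\emph{Step 1: the formula \eqref{FFFm}.} Substitute \eqref{tfomshift} into the definition \eqref{FFm} of $\mathcal{F}_m$. Since $\mathbb{V}_m$, $\mathcal{E}_1^{(m)}$ and $\mathbf{r}_1$ do not depend on $t$, and the t-product is bilinear and continuous, these factors can be taken outside the integral:
\[
\mathcal{F}_m=\mathbb{V}_m\star\Big(\int_0^\infty(\mathcal{H}_m+t\,\mathcal{I}_{m\times m\times n_3})^{-1}\,d\mu(t)\Big)\star\mathcal{E}_1^{(m)}\star\mathbf{r}_1 .
\]
It then suffices to show that the bracketed integral equals $f(\mathcal{H}_m)$ in the sense of \eqref{tensor function}. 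Apply $\bcirc$ to the integrand. Since $\bcirc$ is linear and multiplicative and $\bcirc(\mathcal{I})=I$, we get $\bcirc\big((\mathcal{H}_m+t\mathcal{I})^{-1}\big)=(\bcirc(\mathcal{H}_m)+tI)^{-1}$. The matrix Stieltjes representation from Section~2 then gives $\int_0^\infty(\bcirc(\mathcal{H}_m)+tI)^{-1}d\mu(t)=f(\bcirc(\mathcal{H}_m))$. Folding back identifies the integral with $f(\mathcal{H}_m)$, and the same argument with $\mathcal{A}$ in place of $\mathcal{H}_m$ justifies the integral formula for $\mathcal{I}(f)$ stated before \eqref{shs}.

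\emph{Step 2: the error formula \eqref{error}.} For each $t$ we have $\mathcal{R}_m(t)=\mathcal{B}-(\mathcal{A}+t\mathcal{I})\star\mathcal{X}_m(t)$. Applying $(\mathcal{A}+t\mathcal{I})^{-1}$ gives
\[
(\mathcal{A}+t\mathcal{I})^{-1}\star\mathcal{B}-\mathcal{X}_m(t)=(\mathcal{A}+t\mathcal{I})^{-1}\star\mathcal{R}_m(t).
\]
Integrate against $d\mu$ and use the integral representation of $f(\mathcal{A})\star\mathcal{B}$ together with \eqref{FFm}; this yields \eqref{error}. Proposition~\ref{residufom} then supplies the explicit form \eqref{RRm} of $\mathcal{R}_m(t)$.

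\emph{Main obstacle.} The delicate part is checking that every inverse exists for all $t\in(0,\infty)$. For $\mathcal{A}+t\mathcal{I}$ this is exactly the earlier proposition: $-t$ must lie outside the spectra of the Fourier slices $\bar{\mathcal{A}}^{(k)}$, which is the stated hypothesis that the t-eigenvalues avoid the nonpositive reals. For $\mathcal{H}_m+t\mathcal{I}$ I would work slice-wise in the Fourier domain. By \eqref{decompp}, $\bar{\mathcal{H}}_m^{(k)}=\bar{\mathbb{V}}_m^{(k)H}\bar{\mathcal{A}}^{(k)}\bar{\mathbb{V}}_m^{(k)}$, so its eigenvalues lie in the field of values of $\bar{\mathcal{A}}^{(k)}$. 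Invertibility therefore follows under the natural assumption that these fields of values avoid $(-\infty,0]$, for instance when the slices are positive definite or the numerical range lies in the right half-plane. I would add this as a standing assumption.

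A second point is the interchange of integral and t-product, which I would justify by dominated convergence applied to the matrices $\bcirc(\cdot)$. The required bound is $\|(\bcirc(\mathcal{H}_m)+tI)^{-1}\|\lesssim 1/(1+t)$, which is integrable against $d\mu$ because $f$ is Stieltjes.
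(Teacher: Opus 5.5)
Your proposal is correct and follows the paper's proof. Both substitute \eqref{tfomshift} into \eqref{FFm}, pull the $t$-independent factors out of the integral to obtain $f(\mathcal{H}_m)$, and for the error combine the two integrals and factor $(\mathcal{A}+t\,\mathcal{I}_{n\times n\times n_3})^{-1}$ out of $(\mathcal{A}+t\,\mathcal{I}_{n\times n\times n_3})^{-1}\star\mathcal{B}-\mathcal{X}_m(t)$. Your additional points are sound refinements of that argument that the paper leaves implicit: identifying the integral with $f(\mathcal{H}_m)$ through $\bcirc$, securing invertibility of $\mathcal{H}_m+t\,\mathcal{I}_{m\times m\times n_3}$ via the field of values of the Fourier slices, and checking integrability against $d\mu$.
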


\begin{proof}
	Starting with the definition of \( \mathcal{F}_m \) in \eqref{FFFm} and the expression for \( \mathcal{X}_m(t) \) in \eqref{tfomshift}, we proceed as follows:
	
	\begin{align*}
		{\mathcal{F}}_m &= \int_{0}^{\infty} {\mathcal{X}}_m(t) \, d\mu(t) \\
		&= \int_{0}^{\infty} \mathbb{V}_m \star (\mathcal{H}_m + t\mathcal{I}_{m\times m\times n_3})^{-1} \star \mathcal{E}_1^{(m)} \star {\rm \bf r}_1 \, d\mu(t) \\
		&= \mathbb{V}_m \star \int_{0}^{\infty} (\mathcal{H}_m + t\mathcal{I}_{m\times m\times n_3})^{-1}\star \mathcal{E}_1^{(m)} \star {\rm \bf r}_1 \, d\mu(t) \\
		&= \mathbb{V}_m \star f(\mathcal{H}_m) \star \mathcal{E}_1^{(m)} \star {\rm \bf r}_1.
	\end{align*}
	
	For the $m$th error \eqref{error}, we have
	\begin{align*}
		f(\mathcal{A}) \star {\mathcal{B}} - {\mathcal{F}}_m &= \int_{0}^{\infty} (\mathcal{A} + t \, \mathcal{I} _{n\times n\times n_3})^{-1} \star {\mathcal{B}} \, d\mu(t) \\
		&\quad - \int_{0}^{\infty} \mathbb{V}_m \star (\mathcal{H}_m + t \, \mathcal{I} _{m\times m\times n_3})^{-1} \star \mathcal{E}_1^{(m)} \star {\rm \bf r}_1 \, d\mu(t) \\
		&= \int_{0}^{\infty} (\mathcal{A} + t \, \mathcal{I} _{n\times n\times n_3})^{-1} \star {\mathcal{B}} - {\mathcal{X}}_m(t) \, d\mu(t) \\
		&= \int_{0}^{\infty} (\mathcal{A} + t \, \mathcal{I} _{n\times n\times n_3})^{-1} \star \left( {\mathcal{B}} - (\mathcal{A} + t \, \mathcal{I} _{n\times n\times n_3}) \star {\mathcal{X}}_m(t) \right) \, d\mu(t) \\
		&= \int_{0}^{\infty} (\mathcal{A} + t \, \mathcal{I} _{n\times n\times n_3})^{-1} \star {\mathcal{R}}_m(t) \, d\mu(t).
	\end{align*}
\end{proof}

\noindent In the following, we show that the approximation is exact for polynomials with tube-fiber coefficients of degree at most $m-1$. Before proceeding, we provide the definition of polynomials with tube-fiber coefficients.
\begin{definition} 
	Let $\left\lbrace {\tt a}_1,\ldots,{\tt a}_m\right\rbrace $ be a set of tube fibers ${\tt a}_i \in \mathbb{R}^{1\times 1\times n_3}$. The polynomials of degree $m$ with tube fibers coefficients are defined as follows:
	\begin{equation*}
		p ({\mathcal{X}})= \sum_{i=0}^{n} {\mathcal{X}}^i \star {\tt a}_i ,  \; \; {\mathcal{X}} \in \mathbb{R}^{n\times 1\times n_3}.
	\end{equation*}
	We denote by $\mathbb{P}_m$ the set of polynomials of degree $m$ with tube-fibers coefficients.
\end{definition} 

\noindent Notice that the polynomials of degree $m$ with tube-fibers coefficients can be considered as the generalization of the well-known vector polynomials with scalar coefficients (the tube fibers play the role of scalars). 

\noindent Given a tube-fiber polynomial $p \in \mathbb{P}_m$, we define the action of a tensor $\mathcal{A}\in \mathbb{R}^{n\times n\times n_3}$ with ${\mathcal{V}}\in \mathbb{R}^{n\times 1\times n_3}$ using the operator $\circ$ as follows:
\begin{align*}
	p  (\mathcal{A})\circ {\mathcal{V}} =
	\sum_{i=1}^m   \left(
	\mathcal{A}^{i-1}\star{\mathcal{V}}\right)\star{\rm \bf a}_{i}.
\end{align*}

\noindent There is an inherent connection between the Krylov subspace $\mathcal{TK} _{m}(\mathcal{A},{\mathcal{V}} )$ defined in equation \eqref{ttgk} and the notion of polynomials with tube fibers coefficients. In fact, we have:
\begin{align*}
	\mathcal{TK}_{m}(\mathcal{A},{\mathcal{V}} ) &= \text{Tubal-Span}\left\lbrace {\mathcal{V}},\mathcal{A} \star {\mathcal{V}}, \ldots, \mathcal{A}^{m-1} \star {\mathcal{V}} \right\rbrace \\
	&= \left\lbrace \mathcal{Z} \in \mathbb{R}^{n\times 1\times n_3}, \mathcal{Z}= \sum_{i=1}^m \left(
	\mathcal{A}^{i-1} \star {\mathcal{V}} \right)\star{\rm \bf a}_{i} \right\rbrace \\
	&= \left\lbrace p_{m-1}  (\mathcal{A}) \circ {\mathcal{V}} : p \in \mathbb{P}_{m-1} \right\rbrace .
\end{align*}

\begin{theorem}
	Let $\mathcal{A} \in \mathbb{R}^{n \times n \times n_3}$ and ${\mathcal{B}} \in  \mathbb{R}^{n \times 1 \times n_3}$. Assume that $m$ steps of the tensor t-tubal Arnoldi method (Algorithm \ref{TTA}) have been applied to the pair $(\mathcal{A}, \mathcal{B})$. Then, for any polynomial with tube-fibers coefficients $p_{m-1} \in \mathbb{P}_{m-1}$, it holds:
	\begin{equation*}
		p_{m-1}(\mathcal{A}) \circ {\mathcal{B}} = \mathbb{V}_m \star (p_{m-1} (\mathcal{H}_m) \circ (\mathcal{E}_1^{(m)} \star {\rm \bf r}_1)) .
	\end{equation*}
\end{theorem}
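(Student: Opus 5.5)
By linearity of $p\mapsto p(\cdot)\circ(\cdot)$ in its tube-fiber coefficients, the identity reduces to a statement about monomials. Since tube fibers commute under the t-product, it suffices to show
\begin{equation*}
\mathcal{A}^{j}\star\mathcal{B}=\mathbb{V}_m\star\mathcal{H}_m^{j}\star\mathcal{E}_1^{(m)}\star{\rm \bf r}_1,\qquad j=0,1,\ldots,m-1 .
\end{equation*}
We then right-multiply each monomial identity by ${\rm \bf a}_{j+1}$ and sum, using associativity of $\star$. The commutativity of tube fibers with every tensor, in the sense $\mathcal{X}\star{\rm \bf a}$ versus the action on lateral slices, is a standing property of the t-product: in the Fourier domain it is scalar multiplication slice by slice.

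The base case $j=0$ is the normalization step. Algorithm~\ref{normalization12} gives $\mathcal{B}=\mathcal{V}_1\star{\rm \bf r}_1$, and $\mathcal{V}_1=\mathbb{V}_m\star\mathcal{E}_1^{(m)}$, where $\mathcal{E}_1^{(m)}\in\mathbb{R}^{m\times1\times n_3}$ has the identity tube in its first entry and zeros elsewhere.

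For the induction step, suppose $\mathcal{A}^{j}\star\mathcal{B}=\mathbb{V}_m\star\mathcal{Y}_j$ with $\mathcal{Y}_j:=\mathcal{H}_m^{j}\star\mathcal{E}_1^{(m)}\star{\rm \bf r}_1$ and $j\le m-2$. Apply $\mathcal{A}$ and use the Arnoldi decomposition \eqref{decompp}:
\begin{equation*}
\mathcal{A}^{j+1}\star\mathcal{B}=\mathbb{V}_m\star\mathcal{H}_m\star\mathcal{Y}_j+\mathcal{V}_{m+1}\star{\rm \bf h}_{m+1,m}\star\mathcal{E}_m\star\mathcal{Y}_j .
\end{equation*}
It remains to show $\mathcal{E}_m\star\mathcal{Y}_j=0$, that is, the last tube entry of $\mathcal{Y}_j$ vanishes.

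This is the key step. I will prove the stronger statement that $\mathcal{H}_m^{j}\star\mathcal{E}_1^{(m)}$ has zero tubes in positions $j+2,\ldots,m$. The reason is that $\mathcal{H}_m$ is upper Hessenberg in tube sense: the tubes ${\rm \bf h}_{i,k}$ vanish for $i>k+1$. Multiplying a tube-column supported in the first $j+1$ positions by $\mathcal{H}_m$ therefore yields one supported in the first $j+2$ positions. This follows directly from the entrywise formula $(\mathcal{H}\star\mathcal{Y})_i=\sum_k {\rm \bf h}_{i,k}\star{\bf y}_k$. Since $j\le m-2$, the last position $m$ is zero, and the induction closes.

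I expect this step to be the only real obstacle. It requires reading the tensor $\mathcal{E}_m$ correctly as the selector of the last tube, so that $\mathcal{E}_m\star\mathcal{Y}$ equals the $m$th tube of $\mathcal{Y}$. The note in the statement of \eqref{decompp} about $\unfold(\mathbf{e})$ looks like a typo for the identity tube, so I would use that interpretation. To keep the tube algebra transparent, I may pass to the Fourier domain. There each frontal slice $\bar{\mathcal{H}}_m^{(k)}$ is an ordinary upper Hessenberg matrix, and the argument becomes the classical one slice by slice.

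Finally, since $\deg p_{m-1}\le m-1$, all monomials used fall within the range $j\le m-1$ covered by the induction. Summing them gives the claim, with the ordering $\mathbb{V}_m\star(p_{m-1}(\mathcal{H}_m)\circ(\mathcal{E}_1^{(m)}\star{\rm \bf r}_1))$ justified by associativity.
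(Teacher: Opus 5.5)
Your proposal is correct and follows the paper's proof. Both reduce the claim to the monomial identities $\mathcal{A}^j\star\mathcal{B}=\mathbb{V}_m\star\mathcal{H}_m^j\star\mathcal{E}_1^{(m)}\star\mathbf{r}_1$, prove them by induction using the decomposition \eqref{decompp}, and eliminate the remainder term $\mathcal{V}_{m+1}\star(\mathbf{h}_{m+1,m}\star\mathcal{E}_m)\star\mathcal{Y}_j$ through the tubal upper Hessenberg structure of $\mathcal{H}_m$. Your version is in fact tidier on two points. You justify $\mathcal{E}_m\star\mathcal{H}_m^{j}\star\mathcal{E}_1^{(m)}=0$ by the support-propagation argument, which the paper only asserts. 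Your induction also starts at $j=0$, whereas the paper's nominally begins at $j=2$ and so skips the case $j=1$.
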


\begin{proof}
	It suffices to show that
	$$ \mathcal{A}^j \star {\mathcal{B}} = \mathbb{V}_m \star \mathcal{H}_m^j \star \mathcal{E}_1^{(m)} \star {\rm \bf r}_1, \quad j = 0, \ldots, m-1.$$
	\noindent We have $\mathcal{A}^0 \star {\mathcal{B}} = {\mathcal{B}} = {\mathcal{V}}_1 \star {\rm \bf r}_1 = \mathbb{V}_m \star \mathcal{E}_1^{(m)} \star {\rm \bf r}_1$. Let $j=2, 3, \ldots, m-1$, and assume that
	$$ \mathcal{A}^k \star {\mathcal{B}} = \mathbb{V}_m \star \mathcal{H}_m^k \star \mathcal{E}_1^{(m)} \star {\rm \bf r}_1, \quad k = 0, \ldots, j-1.$$
	We will show
	$$ \mathcal{A}^j \star {\mathcal{B}} = \mathbb{V}_m \star \mathcal{H}_m^j \star \mathcal{E}_1^{(m)} \star {\rm \bf r}_1.$$
	By induction, we have:
	$$ \mathcal{A}^j \star {\mathcal{B}} = \mathcal{A} \star \mathcal{A}^{j-1} \star {\mathcal{B}} = \mathcal{A} \star \mathbb{V}_m \star \mathcal{H}_m^{j-1} \star \mathcal{E}_1^{(m)} \star {\rm \bf r}_1.$$
	Using the decomposition (\ref{decompp}), we get:
	\begin{align*}
		\mathcal{A}^{j} \star {\mathcal{B}} &= [\mathbb{V}_{m} \star \mathcal{H}_{m} + \mathcal{V}_{m+1} \star (
		{\tt h}_{m+1,m} \star \mathcal{E}_{m})] \star \mathcal{H}_m^{j-1} \star \mathcal{E}_1^{(m)} \star {\rm \bf r}_1 \\
		&= \mathbb{V}_{m} \star \mathcal{H}_{m} \star \mathcal{H}_m^{j-1} \star \mathcal{E}_1^{(m)} \star {\rm \bf r}_1 + \mathcal{V}_{m+1} \star (
		{\tt h}_{m+1,m} \star \mathcal{E}_{m}) \star \mathcal{H}_m^{j-1} \star \mathcal{E}_1^{(m)} \star {\rm \bf r}_1.
	\end{align*}
	\noindent Since $\mathcal{H}_m$ is an upper Hessenberg tensor (each frontal slice is an upper Hessenberg matrix), we obtain:
	$$ \mathcal{E}_{m} \star \mathcal{H}_m^j \star \mathcal{E}_1^{(m)} = 0, \;\; j = 1, \ldots, m-2.$$
	Then, it follows that:
	\begin{align*}
		\mathcal{A}^{j} \star {\mathcal{B}} = \mathbb{V}_{m} \star \mathcal{H}_{m} \star \mathcal{H}_m^{j-1} \star \mathcal{E}_1^{(m)} \star {\rm \bf r}_1
		= \mathbb{V}_{m} \star \mathcal{H}_m^{j} \star \mathcal{E}_1^{(m)} \star {\rm \bf r}_1.
	\end{align*}
	This concludes the proof.
\end{proof}

\begin{algorithm}[H]
	\caption{Tensor t-Tubal Arnoldi Method (TTA) for Approximating a  t-function} \label{TTGGMRES}
	
	{\bf Input:} $\mathcal{A} \in \mathbb{R}^{n \times n \times n_3}$, ${\mathcal{B}} \in \mathbb{R}^{n \times 1 \times n_3}$, a function $f$ such that $f(\mathcal{A})$ is defined, and an integer $m$.
	
	\begin{enumerate}
		\item Compute the tensor t-tubal Arnoldi decomposition given by \eqref{decompp}:
		\[
		\mathcal{A} \star \mathbb{V}_{m} = \mathbb{V}_{m} \star \mathcal{H}_{m} + {\mathcal{V}}_{m+1} \star \left(
		{\tt h}_{m+1,m} \star \mathcal{E}_{m}\right),
		\]
		using Algorithm \ref{TTA}.
		
		\item Compute:
		\[
		{\mathcal{F}}_m = \mathbb{V}_m \star f(\mathcal{H}_m) \star \mathcal{E}_1^{(m)} \star {\bf r}_1.
		\]
	\end{enumerate}
	
	{\bf Output:} Approximation of $f(\mathcal{A}) \star \mathcal{B}$.
\end{algorithm}

\section{Application to the approximation of the tensor t-exponential }

In this section, we consider the solution to a multidimensional ordinary differential equation (t-ODEs).

Let $\mathcal{U}\colon [0, \infty) \rightarrow \mathbb{R}^{n \times 1\times n_3}$ be an unknown function, such that $\mathcal{U}(t)$ is the solution of the following evolutionary equation:
\begin{align}\label{tODEs}
	\mathcal{U}^{'}(t)=-\mathcal{A}\star \mathcal{U}(t),
\end{align}
with $\mathcal{U}(0):=\mathcal{B}\in\R^{n\times 1\times n_3},$ where the derivative acts element-wise. Then, from \eqref{Expo}, $\mathcal{U}(t)$ can be expressed using the tensor exponential as follows:
\begin{align*}
	\mathcal{U}(t):=\exp(-t\mathcal{A})\star \mathcal{B}.
\end{align*}

Based on the definition of the tensor Krylov subspace $\mathcal{TK}_{m}(\mathcal{A}, \mathcal{V})$ provided in \eqref{krylovexpr2}, the $m$-step approximation of $\mathcal{U}(t)$ obtained through the tensor t-tubal Arnoldi process is given by:
\[
\mathcal{U}_m(t)=\mathbb{V}_m\star \vecc{\mathcal{Y}}_m(t),\quad \vecc{\mathcal{Y}}_m(t)\in \mathbb{R}^{m\times 1\times n_3}.
\]

$\vecc{\te{Y}}_m$ is obtained such that the residual $\vecc{\te{R}}_m(t) = \te{U}'_m(t) + \te{A}\te{U}_m(t)$ with respect to the (t-ODEs) in \eqref{tODEs} is $T$-orthogonal to $\mathcal{TK}_{m}(\mathcal{A}, {\mathcal{V}})$, i.e.,

\[
\mathbb{V}_m\star \mathcal{R}_m(t)=0,\quad \forall t>0.
\]

We have $\mathcal{U}^{'}_m(t)=\mathbb{V}_m\star \mathcal{Y}^{'}_m(t)$, $\mathcal{H}_m=\mathbb{V}_m^T\star \mathcal{A}\star \mathbb{V}_m$ and $\mathbb{V}_m^T\star \mathbb{V}_m=\mathcal{I}_m$. Then it follows that 
\[
\mathcal{Y}_m^{'}(t)+\mathcal{H}_m\star \mathcal{Y}_m(t)=0\quad t>0,
\]
where $\{\mathbb{V}_m,\mathcal{H}_m\}$ is a pair of tensors generated by $m$ steps of TTA algorithm, (Algorithm \ref{TTA}) when applied to $\mathcal{A}$ and $\mathcal{B}$. Furthermore, we have 
\[
\mathcal{U}_m(0)=\mathbb{V}_m\star \mathcal{Y}_m(0)=\mathcal{U}(0)=B=\mathbb{V}_m\star \mathcal{E}^{(m)}_1\star r_1.
\]
$\mathcal{E}^{(m)}_1$ and $r_1$ are defined below \eqref{decompp}. The approximation $\mathcal{Y}_m$ satisfies the initial condition $\mathcal{Y}_m(0)=\mathcal{E}^{(m)}_1\star r_1.$ Hence, $\mathcal{Y}_m(t)$ solves the reduced multidimensional tensor ODE 
\begin{align}\label{redODE}
	\begin{cases}
		& \mathcal{Y}_m^{'}(t)+\mathcal{H}_m\star \mathcal{Y}_m(t)=0,\quad t>0\\
		& \mathcal{Y}_m(0)=\mathcal{E}^{(m)}_1\star r_1,
	\end{cases}
\end{align}
which gives \[
\mathcal{Y}_m(t)=\exp(-t\mathcal{H}_m)\star (\mathcal{E}^{(m)}_1\star r_1),\quad t>0.
\]

Therefore, the approximate solution of $\mathcal{U}_m(t)$ is written as
\begin{align*}
	\mathcal{U}_m(t)=\mathbb{V}_m\star \exp(-t\mathcal{H}_m)\star (\mathcal{E}^{(m)}_1\star r_1),\quad t>0.
\end{align*}

The next result provides a formula for the residual norm of $\mathcal{R}_m(t)$, formulated so as to avoid any explicit matrix?tensor products involving the large tensor $\te{A}$.

\begin{proposition}
	Let $\mathcal{Y}_m(t)$ be the solution of the reduced ($t$-ODEs) in \eqref{redODE} and let $\mathcal{U}_m(t)$ be the approximation of $\mathcal{U}(t)$ obtained after $m$ steps of the TTA algorithm (Algorithm \ref{TTA}). Then, the residual $\mathcal{R}_m(t)$ is given by: 
	\begin{align*}
		\|\mathcal{R}_m(t)\|_F=\|(h_{m+1,m}\star \mathcal{E}_m)\star \mathcal{Y}_m(t)\|_F.
	\end{align*}
\end{proposition}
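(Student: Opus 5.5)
The plan is to compute $\mathcal{R}_m(t)$ explicitly from the tensor t-tubal Arnoldi decomposition \eqref{decompp}, reduce it to a single term along $\mathcal{V}_{m+1}$, and then use orthonormality of $\mathcal{V}_{m+1}$ in the t-product sense to drop it from the Frobenius norm.

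\textbf{Step 1: reduce the residual to one term.} Write $\mathcal{U}_m(t)=\mathbb{V}_m\star\mathcal{Y}_m(t)$, so that $\mathcal{U}_m'(t)=\mathbb{V}_m\star\mathcal{Y}_m'(t)$. Hence
\[
\mathcal{R}_m(t)=\mathbb{V}_m\star\mathcal{Y}_m'(t)+\mathcal{A}\star\mathbb{V}_m\star\mathcal{Y}_m(t).
\]
Substituting \eqref{decompp} for $\mathcal{A}\star\mathbb{V}_m$ gives
\[
\mathcal{R}_m(t)=\mathbb{V}_m\star\bigl(\mathcal{Y}_m'(t)+\mathcal{H}_m\star\mathcal{Y}_m(t)\bigr)+\mathcal{V}_{m+1}\star(\mathbf{h}_{m+1,m}\star\mathcal{E}_m)\star\mathcal{Y}_m(t).
\]
The first term vanishes because $\mathcal{Y}_m$ solves the reduced problem \eqref{redODE}. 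Therefore
\[
\mathcal{R}_m(t)=\mathcal{V}_{m+1}\star\mathbf{c}(t),\qquad \mathbf{c}(t):=(\mathbf{h}_{m+1,m}\star\mathcal{E}_m)\star\mathcal{Y}_m(t)\in\mathbb{R}^{1\times1\times n_3}.
\]
This is the analogue of Proposition~\ref{residufom}.

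\textbf{Step 2: remove $\mathcal{V}_{m+1}$ from the norm.} I need $\|\mathcal{V}_{m+1}\star\mathbf{c}\|_F=\|\mathbf{c}\|_F$ for every tube $\mathbf{c}$. One route uses the identity $\|\mathcal{X}\|_F^2=\langle \mathcal{X}^T\star\mathcal{X}\rangle_{(1)}$, the $(1,1)$ entry of the first frontal slice of $\mathcal{X}^T\star\mathcal{X}$. This gives
\[
\|\mathcal{V}_{m+1}\star\mathbf{c}\|_F^2=\text{first entry of } \mathbf{c}^T\star\mathcal{V}_{m+1}^T\star\mathcal{V}_{m+1}\star\mathbf{c}.
\]
The tensor $\mathcal{V}_{m+1}$ is produced by Algorithm~\ref{normalization12}, so $\mathcal{V}_{m+1}^T\star\mathcal{V}_{m+1}=\mathbf{e}_1$, the identity tube. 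The expression therefore reduces to the first entry of $\mathbf{c}^T\star\mathbf{c}$, which equals $\|\mathbf{c}\|_F^2$.

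Alternatively, I can work in the Fourier domain. By Parseval, $\|\mathcal{X}\|_F^2=\frac1{n_3}\sum_k\|\bar{\mathcal{X}}^{(k)}\|_F^2$. The $k$-th slice of $\mathcal{V}_{m+1}\star\mathbf{c}$ is $\bar{V}^{(k)}\bar c^{(k)}$, where $\bar V^{(k)}$ is a unit vector by the construction in Algorithm~\ref{normalization12}. Hence $\|\bar V^{(k)}\bar c^{(k)}\|=|\bar c^{(k)}|$, and summing over $k$ gives the claim.

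\textbf{Main obstacle.} The expected difficulty is Step 2 in the degenerate case. If some Fourier slice of $\mathcal{W}$ has norm below tolerance, the normalization algorithm stops. In that case $\mathcal{V}_{m+1}$ may fail to be a unit in every slice. I would assume no breakdown occurs, so that all Fourier slices are normalized. Under that assumption the Fourier-domain argument is clean: the corresponding slice of $\mathbf{h}_{m+1,m}$ is then nonzero, and the identity holds slice by slice. Finally, note that the paper's condition ``$\mathbb{V}_m\star\mathcal{R}_m=0$'' should be read as $\mathbb{V}_m^T\star\mathcal{R}_m=0$. This orthogonality condition is not needed beyond what was already used to derive \eqref{redODE}.
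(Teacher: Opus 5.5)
Your proposal is correct and follows the paper's proof: substitute \eqref{decompp} into $\mathcal{R}_m(t)=\mathbb{V}_m\star\mathcal{Y}_m'(t)+\mathcal{A}\star\mathbb{V}_m\star\mathcal{Y}_m(t)$, use \eqref{redODE} to eliminate the $\mathbb{V}_m$ term, and then drop $\mathcal{V}_{m+1}$ from the Frobenius norm. The paper asserts that last equality without comment, whereas you justify it, either via $\mathcal{V}_{m+1}^T\star\mathcal{V}_{m+1}$ being the identity tube or slice by slice in the Fourier domain; your no-breakdown assumption is also harmless, since any Fourier slice where $\mathbf{h}_{m+1,m}$ vanishes makes the corresponding slice of $\mathbf{c}(t)$ vanish too.
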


\begin{proof}
	We have
	\begin{align*}
		\mathcal{R}_m(t)&=\mathcal{U}'_m(t)+\mathcal{A}\star \mathcal{U}_m(t)\\
		&=\mathbb{V}_m\star \mathcal{Y}'_m(t)+\mathcal{A} \star\mathbb{V}_m\star\mathcal{Y}_m(t)\\
		&=\mathbb{V}_m\star( \mathcal{Y}'_m(t)+\mathcal{H}_m\star\mathcal{Y}_m(t))+\mathcal{V}_{m+1}\star(h_{m+1,m}\star\mathcal{E}_m)\star\mathcal{Y}_m(t)\\
		\intertext{and because  $\mathcal{Y}_m(t)$ is the solution of \eqref{redODE}, it follows that}
		\mathcal{R}_m(t)&=\mathcal{V}_{m+1}\star(h_{m+1,m}\star\mathcal{E}_m)\star\mathcal{Y}_m(t)
	\end{align*}
	Therefore,
	\[
	\|\mathcal{R}_m(t)\|_F=\|\mathcal{V}_{m+1}\star(h_{m+1,m}\star\mathcal{E}_m)\star\mathcal{Y}_m(t)\|_F=\|(h_{m+1,m}\star\mathcal{E}_m)\star\mathcal{Y}_m(t)\|_F.
	\]
\end{proof}

Adopting the strategy introduced by Van den Eshof and Hochbruck in \cite{VH} for the exponential function, we consider the following error estimate to analyze the convergence behavior of the $(m+1)$-step tensor t-tubal Arnoldi approximation of $\mathcal{U}(t)$.

\begin{align}\label{Estimate}
	\epsilon_{m}(t):= \dfrac{\|\mathcal{U}(t)-\mathcal{U}_{m}(t)\|_F}{\|\mathcal{U}_{m}(t)\|_F}\approx \dfrac{\delta_{m}(t)}{1-\delta_{m}(t)}:=\gamma_{m}(t),\quad \forall t>0.
\end{align}
where $\delta_{m}:=\|\mathcal{U}_{m+1}(t)-\mathcal{U}_{m}(t)\|_F/\|\mathcal{U}_{m}(t)\|_F.$ The error estimate was also applied in \cite{KS} in the context of the extended Arnoldi method. The quantity $\gamma_{m}(t)$ can be considered as a stopping criterion for detecting convergence.
The computation of $\gamma_{m}(t)$ requires performing one additional step of the algorithm, but it is useful for estimating the error at step $m$.
Define the $\infty$-estimate error and $\infty$-relative error by
\begin{align}\label{infestimate}
	\epsilon_{m}:=\max\limits_{t_1,\ldots,t_M}\epsilon_{m}(t),\quad \text{and}\quad  \gamma_{m}:=\max\limits_{t_1,\ldots,t_M}\gamma_{m}(t). 
\end{align}

\begin{algorithm}
	\caption{Approximation of $\exp{(-t\mathcal{A})}\star \mathcal{B}$ for multiple values of $\{t_1,\ldots,t_M\}$ by the  tensor t-tubal Arnoldi method (exp-TTA)}\label{alg:exponential}
	\textbf{Inputs:}  Tensors $\mathcal{A}$,  $\mathcal{B}$ and $\Sigma=\{t_1,\ldots,t_M\}$.
	\begin{enumerate}
		\item Choose a tolerance $tol>0$, and a maximum number of $itermax$ iterations.
		\item  Set   $\gamma_0=1$ and $\Sigma_c=\emptyset$.
		\item Set $[\mathcal{V}_{1}, {\rm \bf r}_{1}]=  \text{Normalization}(\mathcal{V})$
		\item For $m=1:itermax$
		\begin{enumerate}
			\item $\widetilde{\mathcal{W}}=\mathcal{A}\star \mathcal{V}_{m};$
			\item For $i=1:m$\\
			$h_{i,m}=\mathcal{V}_i^T\star \widetilde{\mathcal{W}};$\quad $\widetilde{\mathcal{W}}=\widetilde{\mathcal{W}}-\mathcal{V}_i\star h_{i,m};$\\
			endfor
			\item $[\mathcal{V}_{m+1}, {\rm \bf h}_{m+1,m}]=  \text{Normalization}(\widetilde{\mathcal{W}})$

			\item Compute $\mathcal{Y}_m(t)=\exp(-t\mathcal{H}_m)\star (\mathcal{E}^{(m)}_1\star r_1)$ and  $\mathcal{U}_{m}(t)=\mathbb{V}_{m}\star\mathcal{Y}_{m}(t)$ for $t\in \Sigma/\Sigma_c$.
			\item Select the new $t\in \Sigma/\Sigma_c$ such that  $\gamma_{m-1}(t)<tol$, where $\gamma_{m-1}(t)=\delta_{m-1}(t)/(1-\delta_{m-1}(t))$ is given by \eqref{Estimate}. Update set $\Sigma_c$ of converged $\mathcal{U}_{m-1}(t)$.
			\item if $\Sigma_c=\Sigma$. \\
			Break.\\ end if.
			
			\item end for.
		\end{enumerate}
	\end{enumerate}
	\textbf{Outputs:} Approximation $\mathcal{U}_{m}(t)$ of $\mathcal{U}(t)=\exp{(-t\mathcal{A})}\star \mathcal{B}$\, for $t=\{t_1,\ldots,t_M\}$  and the $\infty$-estimate error  $\gamma_{m-1}=\max\limits_{t\in \{t_1,\ldots,t_M\}}\gamma_{m-1}(t)$.
\end{algorithm}

Algorithm \ref{alg:exponential} describes the approximation of $\mathcal{U}(t) = \exp{(-t\mathcal{A})} \star V$. It is an iterative algorithm that reuses information for $\{t_1, \ldots, t_M\}$. In this algorithm, we start with $\Sigma_c = \emptyset$, and at each iteration $m$, we identify the values $t \in \Sigma$ such that $\gamma_{m-1}(t) < tol$, where $\gamma_{m-1}(t)$ is given by \eqref{Estimate}. We add the values of $t$ that satisfy this stopping criterion to the set $\Sigma_c$. This indicates that the approximate solutions $\mathcal{U}_{m-1}(t)$ have converged for all values $t \in \Sigma_c$. The iterations terminate when $\Sigma_c = \Sigma$, i.e., when the approximate solutions determined by the tensor t-tubal Arnoldi method have converged for all $t \in \{t_1, \ldots, t_M\}$.

\section{Numerical experiments }
\medskip
\noindent This section presents some numerical results that illustrate the performance of the tensor t-tubal Arnoldi method.     All experiments were carried
out in  MATLAB R2018b on a computer with an  Intel(R)  Core i$7$ processor and $16$GB of RAM. The computations were done with about $15$ significant decimal
digits. The numerical section includes a comparison of the proposed method with two variants of the Arnoldi method, both of which use standard Krylov subspaces. The first approach is based on the approximation of $ f(\bcirc(\mathcal{A})) \, \text{unfold}(\mathcal{B})$ derived directly from the definition of the $t$-function given by \eqref{tensor function}, achieved by applying the standard Arnoldi method to the pair $\{\bcirc(\mathcal{A}), \text{unfold}(\mathcal{B})\}$. This method is denoted as (SA($\bcirc$)). The second approach is based on the fact that $\bcirc(\mathcal{A})$ can be block diagonalized by using the Fourier transform, i.e., 
\[
\bcirc(\mathcal{A})=(F^H_{n_3}\otimes I_n)\boldsymbol{D}(F_{n_3}\otimes I_n).
\]

\noindent By applying $f$ to both sides, we obtain

\[
f(\bcirc(\mathcal{A}))=(F^H_{n_3}\otimes I_n)f(\boldsymbol{D})(F_{n_3}\otimes I_n),
\]
where
\[
\boldsymbol{D}:=\left[
\begin{array}{cccc}
	{A}_1& &&\\
	& {A}_2&&\\
	&&\ddots&\\
	&&&{A}_{n_3}\\
\end{array}
\right]
\]
Thus, $f(\bcirc(\mathcal{A}))\unfold(\mathcal{B})$ can be rewritten as

\[
f(\bcirc(\mathcal{A}))\unfold(\mathcal{B})=(F^H_{n_3}\otimes I_n)f(\boldsymbol{D})(F_{n_3}\otimes I_n)\unfold(\mathcal{B}).
\]

This approach involves the approximation of the following complex-valued matrix function \(f(\boldsymbol{D})(F_{n_3}\otimes I_n)\unfold(\mathcal{B}) \) using the standard Arnoldi method applied to the pair  $(\boldsymbol{D},(F_{n_3}\otimes I_n)\unfold(\mathcal{B}))$. 
We refer to this approach as the standard complex Arnoldi method (SA(fft)).

\subsection{Example for approximations of $f(\mathcal{A})\star \mathcal{B}$}
\noindent In the following example, we compute approximations of \( f(\mathcal{A}) \star \mathcal{B} \) using the TTA method as outlined in Algorithm \ref{TTGGMRES}, along with the SA(fft) and SA(\(\bcirc\)) methods, where \( \mathcal{A} \in \mathbb{R}^{n \times n\times n_3} \) with \( n =10000\).
The tensor $\mathcal{B} \in \R^{n \times 1\times n_3}$ is generated randomly, with entries uniformly distributed in the interval $[0, 1]$. We report the magnitude of the relative error:
\[
Err_m= \frac{|| f(\mathcal{A})\star \mathcal{B} - \mathcal{F}_m||_{F}}{|| f(\mathcal{A})\star \mathcal{B}||_{F}}.   
\]

Here, $\mathcal{F}_m$ is defined in \eqref{FFm}.

\textbf{Example $1$.} 
The tensor $\mathcal{A}$ has three frontal slices (i.e., $n_3 = 3$), derived from the following matrices:
\begin{align*}
	\mathcal{A}^{(1)} &: \textit{a tridiagonal matrix defined as } n^2 \, \textsc{tridiag}(1, 2, -1), \\
	\mathcal{A}^{(2)} &: \textit{a nonsymmetric Toeplitz matrix with first row and column  }\\ &[1,1/2,\ldots,1/1000]\textit{ and } [1,\ldots,1], \textit{ respectively. } \\
	\mathcal{A}^{(3)} &: \textit{a nonsymmetric matrix whose $n$ eigenvalues are log-uniformly}\\
	&\textit{ distributed in the interval } [10^{-1}, 10^4] \textit{ with random
		eigenvectors}.
\end{align*}

Table \ref{tab:table12} displays the magnitude errors $(Err(m))$ and the required CPU time (Time) in seconds for various choices of the function $f$, with the number of iterations set to $m=10$ for the three methods. The results show that all three methods yield approximations of about the same quality. However, the TTA method requires less CPU time than the SA(fft) and SA(\(\bcirc\)) methods.

\begin{table}[htbp]
	\centering
	
		\caption{Example $1$: Results for several functions $f$, $m=10,$ $n=10000$}\label{tab:table12}
		\begin{tabular}{c|c|c|c|c|c|c}
			\multirow{2}{2cm}{$f(x)$} & \multicolumn{2}{c|}{TTA} &
			\multicolumn{2}{c|}{SA(fft)} & \multicolumn{2}{c}{SA(bcric)}\\
	\cmidrule(lr){2-7}	
			& Err$(m)$ & Time & Err$(m)$ & Time &Err$(m)$ & Time\\
			\hline
			\hline
			$\sqrt{x}$ & $1.63\cdot 10^{-6}$ & $14.26$ & 
			$1.77\cdot 10^{-6}$ & $92.13$ & 
			$1.77\cdot 10^{-6}$ & $28.92$ \\
			$\ln{x}$ & $8.18\cdot 10^{-7}$ & $14.66$ &
			$8.89\cdot 10^{-7}$ & $94.76$  &
			$8.89\cdot 10^{-7}$ & $28.54$\\
			$x^{1/4}$ & $1.94\cdot 10^{-6}$ & $15.46$ &
			$2.11\cdot 10^{-6}$ & $97.14$ &
			$2.11\cdot 10^{-6}$ & $29.07$ \\
			$x^{-1/2}$ & $2.42\cdot 10^{-5}$ & $14.19$ & 
			$2.46\cdot 10^{-5}$ & $93.58$ & 
			$2.64\cdot 10^{-5}$ & $28.05$ \\
			$x^{-1/4}$ & $7.15\cdot 10^{-6}$ & $14.40$ & 
			$7.78\cdot 10^{-6}$ & $91.47$ & 
			$7.78\cdot 10^{-6}$ & $27.26$ 
		\end{tabular}
\end{table}

\subsection{Application to the solution of time-dependent partial differential equations}

In this subsection, we illustrate the performance of the tensor t-tubal  Arnoldi method when approximating $\mathcal{U}(t)=\exp{(-t\mathcal{A})}\star \mathcal{B}$ for multiple values of $t\in\{t_1,\ldots,t_M\}$ using the ($\exp$-TTA) method as implemented by Algorithm \ref{alg:exponential} with respect to $\exp$-SA(fft) and $\exp$-SA($\bcirc$) methods.  The $\exp$-SA($\bcirc$) method is applied to the pair $\{\bcirc(\mathcal{A}),{\rm unfold}(\mathcal{B})\}$. The $\exp$-SA(fft) method is applied to the  pair $(\boldsymbol{D},(F_{n_3}\otimes I_n)\unfold(\mathcal{B}))$.  The values of the time step $t$ are taken to be $100$ values uniformly distributed in the interval $[10^{-2},1]$.    The different tables in this subsection report the dimension (dim) of larger space as soon as $\gamma_{dim}<tol$, the required CPU time (Time) in seconds, and the $\infty$-estimate error for both  $\exp$-TTA, $\exp$-SA(fft) and $\exp$-SA($\bcirc)$ methods. The $\infty$-estimate error for the $\exp$-TTA method is given in \eqref{infestimate}. The $\infty$-estimate errors for $\exp$-SA(fft) and $\exp$-SA(bcirc) are computed using the same configuration.

\noindent {\bf Example $2$} The purpose of this example is to illustrate the quality of the estimate error $\gamma_{m}(t)$ compared to the relative error $\epsilon_{m}(t)$ given by \eqref{infestimate} when applying the $\exp$-TTA method to approximate $\mathcal{U}(t)=\exp(-t\mathcal{A})\star \mathcal{B}$. The tensor  $\mathcal{A}$ in this example has three frontal slices (i.e., $n_3 = 3$), each obtained from the centered finite difference discretization (CFDD) in two dimensions with homogeneous Dirichlet boundary conditions for the operators:

\begin{align*}
	\mathcal{L}_{\mathcal{A}^{(1)}}(u) &= -\Delta u + \exp(xy) u_x + \sin(xy) u_y + (y^2-x^2)u,\\
	\mathcal{L}_{\mathcal{A}^{(2)}}(u) &= -\Delta u + \exp(-x*y) u_x + \cos(xy) u_y + (x^2-y^2)u,\\
	\mathcal{L}_{\mathcal{A}^{(3)}}(u) &= -\Delta u.
\end{align*}

The spatial domain is the square $[0,1] \times [0,1]$ with $n = n_0^2$, where $n_0$ is the number of internal grid points in each direction. The starting tensor $\mathcal{B}$ has three frontal slices, each of which is the discretization on $[0,1] \times [0,1]$ of the following functions: 
$(x,y) \mapsto xy(1-x)(1-y)$, $(x,y) \mapsto \sin(\pi x)\sin(\pi y)$, and $(x,y) \mapsto xy(1-x)(1-y)\cos(\pi y)$. 

The plots in Figure \ref{figure:DeriveSeconde} display the estimate error (red) and the relative error (blue) for a dimension of $n = 900$ and different values of $t=\{0.25,0.5,0.75\}$   versus the number of iterations. As illustrated in these plots, the estimated error $\gamma_{m}(t)$ coincides at all iterations with the relative error $\epsilon_{m}(t)$ defined in \eqref{infestimate}.

\begin{figure}
	\includegraphics[width=\textwidth,height=8cm]{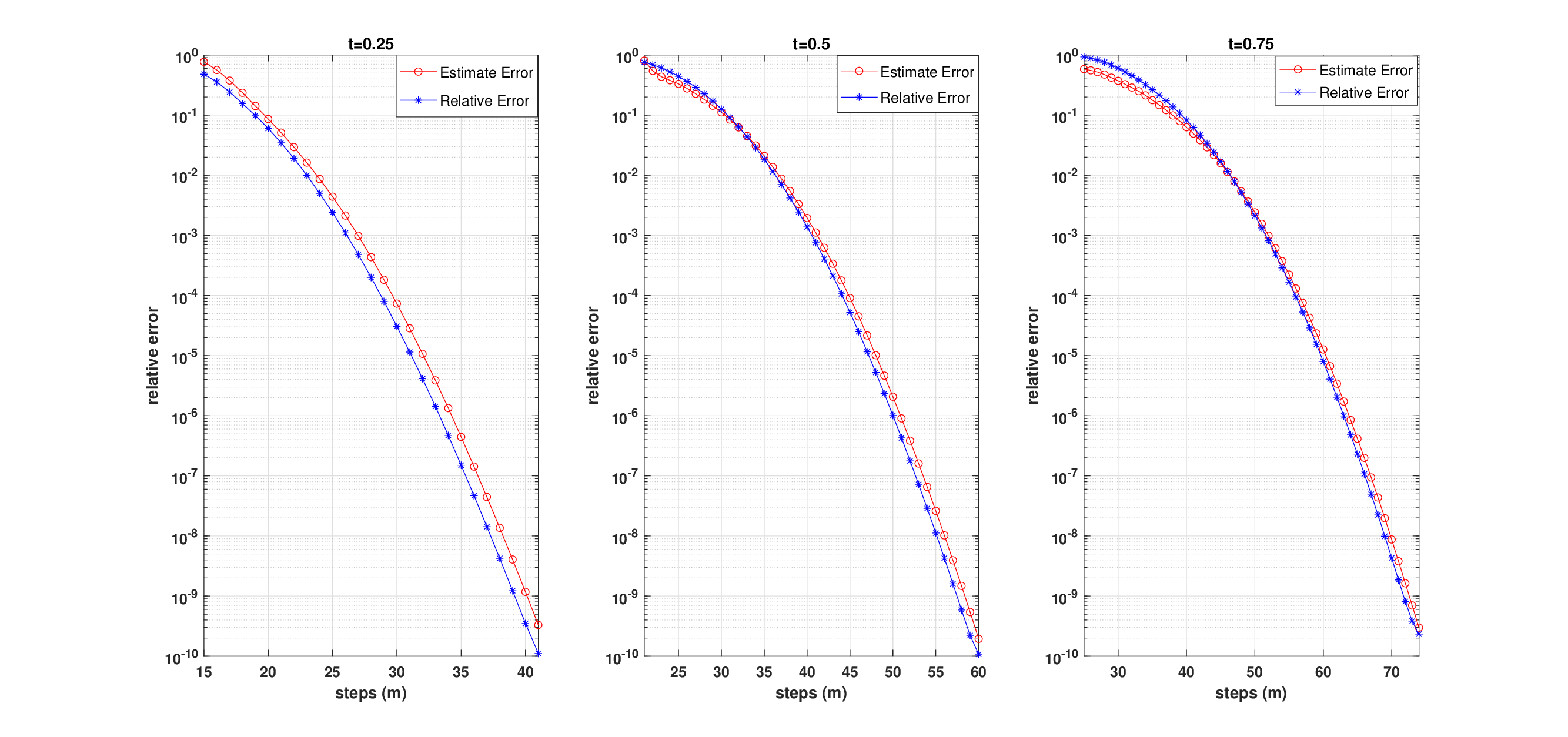}
	\caption{Example $2$: Comparison of the estimate error $\gamma_m(t)$ and the relative error $\epsilon_m(t)$ for different values of $t$}
	\label{figure:DeriveSeconde}
\end{figure}

\noindent {\bf Example $3$}   In this example, we consider the approximation of the solution of the following partial differential equations on the unit square $[0,1] \times [0,1]$ with homogeneous Dirichlet boundary conditions.

\begin{equation*}
	\begin{array}{rll}
		\dfrac{\partial u}{\partial t} - \mathcal{L}(u) &= 0 & \text{on } (0,1)^2 \times (0,1), \\
		u(x,y,t) &= 0 & \text{on } \partial (0,1)^2 \text{ for all } t \in [0,1], \\
		u(x,y,0) &= v & \text{for all } (x,y) \in [0,1]^2.
	\end{array}
\end{equation*}

The number of discretization points in each direction is $n_0$, and the dimension of the matrices is $n = n_0^2$. In this experiment, the matrices $A(\mathcal{L})\in \mathbb{R}^{n \times n}$, with $n \in \{900, 1600, 2500\}$, are coming from the centered finite difference discretization (CFDD) of the operator $\mathcal{L}$, as given by \eqref{EO}.

\begingroup\small
\begin{equation}\label{EO}
	\begin{array}{rl}
		\mathcal{L}(u) &= -\Delta u + \beta\left(\frac{1}{2}(v_1u_x + v_2u_y) + \frac{1}{2}((v_1u)_x + (v_2u)_y)\right).
	\end{array}
\end{equation}
\endgroup

Here, \( \beta \in \mathbb{R} \),  \( v_1 \) and \( v_2 \) are complex-valued functions defined as follows:
\[
v_1(x, y) = x + iy, \quad v_2(x, y) = x - iy, \quad \text{where} \quad i^2 = -1.
\]

The initial vector $v$ is the pointwise discretization of the function $(x,y) \mapsto \sin(\pi x)\sin(\pi y)$ on the internal grid points. 

Next, we define tensors $\mathcal{A}$ and $\mathcal{B}$ with three frontal slices $(n_3=3)$ such that:

\begin{align*}
	\bcirc(\mathcal{A}) &= (F^H_{n_3} \otimes I_n)
	\begin{bmatrix}
		A_1 & & \\
		& A_2 &  \\
		& & A_3  
	\end{bmatrix}
	(F_{n_3} \otimes I_n),
\end{align*}

\begin{align*}
	\unfold(\mathcal{B}) &= (F^H_{n_3} \otimes I_n)
	\begin{bmatrix}
		v  \\
		v  \\
		v  
	\end{bmatrix}.
\end{align*}

where $A_1$ and $A_2$ are coming from the CFDD of the operator $\mathcal{L}$ when $\beta=0$ and $\beta=1$; respectively. $A_3$ is obtained as the conjugate of $A_2$ i.e., $A_3=Conj(A_2)$. In this example,  We let $tol=2\cdot 10^{-5}$ and $itermax=200.$ 

Table~\ref{tab:table2} reports the performance of the $\exp$-TTA, $\exp$-SA(fft), and $\exp$-SA(bcirc) methods for approximating $\exp(-t\mathcal{A}) \star \mathcal{B}$ with increasing values of $n$. All methods achieve comparable $\infty$-estimate errors, on the order of $10^{-5}$. However, the $\exp$-TTA method is faster than the $\exp$-SA(fft) and $\exp$-SA($\bcirc$) methods and it achieves convergence  with significantly lower dimensional projections (fewer iterations).

\begin{table}
	\caption{Example $3$: Approximation of $\exp{(-t\mathcal{A})}\star \mathcal{B}$ for multiple values of $n$ associated to the operator $\mathcal{L}(u)$, $n_3=3.$}
	\label{tab:table2}
	\begin{tabular}{l|c|l|c|c|c}
		Operators & Size$(n)$ & Methods & Time$(s)$ & Dim. & $\infty$- Estimate Error \\
		\hline
		$\mathcal{L}(u)$ & $n=900$ &  $\exp$-TTA & $6.97$ & $46$ & $1.90\cdot 10^{-5}$  \\
		& & $\exp$-SA(fft) & $25.57$ & $106$  & $1.83\cdot 10^{-5}$  \\
		& & $\exp$-SA(bcirc) & $12.97$ & $106$  & $1.99\cdot 10^{-5}$\\
		\hline
		\hline
		$\mathcal{L}(u)$ & $n=1600$ &  $\exp$-TTA & $15.13$ & $60$ & $1.99\cdot 10^{-5}$  \\
		& & $\exp$-SA(fft) & $63.15$ & $138$  & $1.99\cdot 10^{-5}$  \\
		& & $\exp$-SA(bcirc) & $25.70$ & $138$  & $1.99\cdot 10^{-5}$\\
		\hline
		\hline
		$\mathcal{L}(u)$ & $n=2500$ &  $\exp$-TTA & $34.38$ & $74$ & $1.99\cdot 10^{-5}$  \\
		& & $\exp$-SA(fft) & $118.58$ & $169$  & $1.99\cdot 10^{-5}$  \\
		& & $\exp$-SA(bcirc) & $48.22$ & $169$  & $1.99\cdot 10^{-5}$  \\
		
	\end{tabular}
\end{table}

\noindent In Examples 4 and 5, we use real-world network data from the Suite Sparse Matrix Collection \cite{DH}.
Each tensor contains three frontal slices representing adjacency matrices of graphs.
When the matrices $A_1$, $A_2$, and $A_3$ have different dimensions, we set $n = \max{n_1, n_2, n_3}$, where $n_i$ is the dimension of $A_i$, and apply zero-padding to ensure uniform tensor dimensions. The initial tensor $\mathcal{B}$ is chosen as $\mathcal{B}^{(1)} = [1, \ldots, 1]^T \in \mathbb{R}^n$, with the remaining frontal slices are set to zero. Here, $\mathcal{B}^{(1)}$ denotes the first frontal slice of the tensor $\mathcal{B}$.

\noindent \textbf{Example $4$.} In this example, we present the quality of the estimate error $\gamma_{m}(t)$ with respect to the relative error $\epsilon_{m}(t)$ given by \eqref{infestimate} when applying the $\exp$-TTA method to approximate $\mathcal{U}(t) = \exp(-t\mathcal{A}^T) \star \mathcal{B}$. The plots in Figure \ref{figure:directed} display the estimate error (red) and the relative error (blue) for three graphs (Minnesota $n=2642$, Yeast $n=2361$, Delaunay $n=2048$) on the left, and three graphs (CSphd $n=1882$, GD$06\_$Java $n=1538$, Roget $n=1022$) on the right, versus the number of iterations for $t=0.5$.

\begin{figure}
	\includegraphics[width=\textwidth,height=9cm]{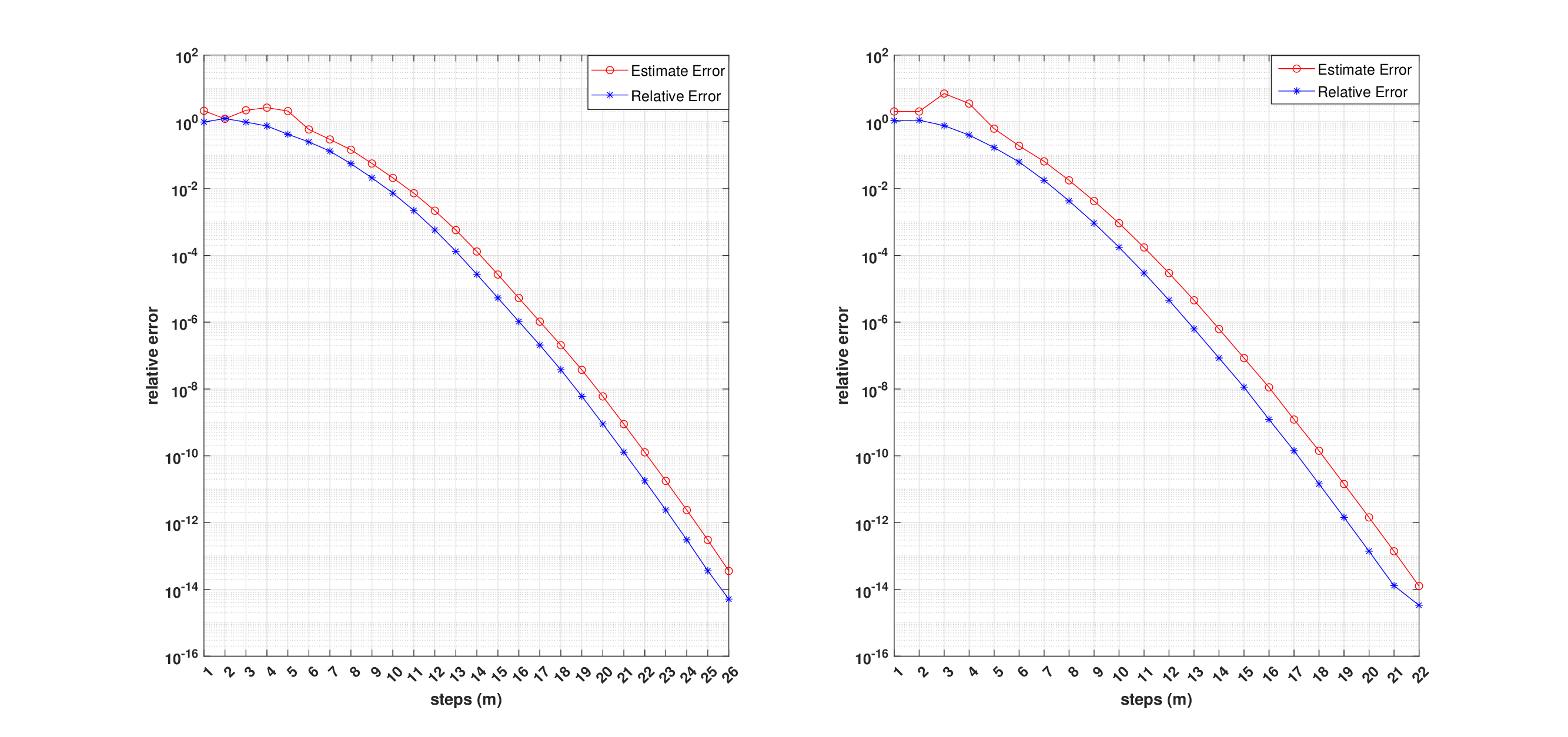}
	\caption{Example $4$: Comparison of the estimate error $\gamma_m(t)$ and the relative error $\epsilon_m(t)$ for $t=0.5$.}
	\label{figure:directed}
\end{figure}

\noindent Figures \ref{figure:directed}  show that the estimate error $\gamma_{m}(t)$ provides higher precision for the behavior of the relative error $\epsilon_{m}(t)$ given by \eqref{infestimate} for $t=0.5$.

\noindent \textbf{Example $5$.} 
In this example, we compare the $\exp$-TTA,  $\exp$-SA(fft) and $\exp$-SA($\bcirc)$ methods for approximating $\exp(-t\mathcal{A}) \star \mathcal{B}$ on several graphs (Oregon-1 with $n=11492$, fe\_4elt2 with $n=11143$, p2p-Gnutella04 with $n=10879$, California with $n=9664$, wb-cs-standard with $n=9914$). We will use two
$n\times n\times 3$ tensors $\mathcal{A}_1$ and $\mathcal{A}_2$ defined as follows

\begin{equation*}
	\begin{array}{lllllll}
		\A_1&=&\fold\Bigg(\begin{bmatrix}\textbf{p2p-Gnutella04}\\\textbf{California}\\
			\textbf{wb-cs-stanford}\end{bmatrix}\Bigg),& \A_2&=&\fold\Bigg(\begin{bmatrix}\textbf{Oregon-1}\\
			\textbf{fe$\_$4elt2}\\\textbf{p2p-Gnutella04}\end{bmatrix}\Bigg).
	\end{array}
\end{equation*}

The approximation errors and timings are presented in Table \ref{tab:table3}. This table shows the results for approximating $\mathcal{U}(t) = \exp{(-t\mathcal{A}^T)} \star \mathcal{B}$ using the $\exp$-TTA method,  We let $tol=2\cdot 10^{-7}$ and $itermax=200.$

\begin{table}
	\caption{Example $5$: Approximation of $\exp{(-t\mathcal{A})}\star \mathcal{B}$ for several graphs, $n_3=3.$}
	\label{tab:table3}
	\begin{tabular}{l|c|l|c|c|c}
		Tensors & Size$(n)$ & Methods & Time$(s)$ & Dim. & $\infty$- Estimate Error \\
		\hline
		$\mathcal{A}_1$ & $n=10879$ &  $\exp$-TTA & $34.12$ & $17$ & $1.99\cdot 10^{-7}$  \\
		& & $\exp$-SA(fft) & $384$ & $28$  & $1.99\cdot 10^{-7}$  \\
		& & $\exp$-SA(bcirc) & $48.96$ & $28$  & $1.99\cdot 10^{-7}$  \\
		$\mathcal{A}_2$ & $n=11492$ & $\exp$-TTA & $41.82$ & $18$ & $1.97\cdot 10^{-7}$  \\
		& & $\exp$-SA(fft) & $410$  & $27$ & $1.98\cdot 10^{-7}$  \\
		& & $\exp$-SA(bcirc) & $55.01$  & $27$ & $1.96\cdot 10^{-7}$  \\
	\end{tabular}
\end{table}

Table~\ref{tab:table3} reports the results for Example~5 on two real-world tensors. All methods yield comparable $\infty$-estimate errors of order $10^{-7}$. The $\exp$-TTA method achieves the lowest computational time and uses smaller projection dimensions compared to $\exp$-SA(fft) and $\exp$-SA(bcirc).

\section*{Conclusion}  
This paper presents the tensor t-tubal Arnoldi method, based on the tensor t-product, for the efficient approximation of matrix functions of the form \eqref{eq1}. Applications include the solution of multidimensional ordinary differential equations of the form 
through estimates of parameter-dependent exponential tensor functions of the form \eqref{expA}. Numerical experiments illustrate the effectiveness of the proposed method in solving well-known time-integration ODEs. The results demonstrate that the tensor Arnoldi method requires fewer iterations and less CPU time than standard Arnoldi matrix methods, to deliver approximations of the same accuracy.


%
\section*{Conflict of interest}

The authors declare no competing interests.

\section*{Data availability}
Data sharing is not applicable to this article as no datasets
were generated or analyzed during the current study.

\section*{Funding}
This research received no specific grant from any funding agency.

\end{document}